\newtheorem{theorem}{Theorem}[section]
\newtheorem{proposition}[theorem]{Proposition}
\newtheorem{lemma}[theorem]{Lemma}
\newtheorem{corollary}[theorem]{Corollary}
\theoremstyle{definition}
\newtheorem{remark}[theorem]{Remark}
\newtheorem{definition}[theorem]{Definition}
\numberwithin{equation}{section}
\newtheorem{mainthm}{Theorem}
\newcommand{\N}{\mathbb N}
\newcommand{\Z}{\mathbb Z}
\newcommand{\sa}{a}
\newcommand{\sac}{\overline{a}}
\newcommand{\sbb}{b}
\newcommand{\sbc}{\overline{b}}
\newcommand{\sbwi}{\widetilde{b}^{(1)}}
\newcommand{\sbwii}{\widetilde{b}^{(2)}}
\newcommand{\st}{\overline{t}}
\newcommand{\stc}{t}
\newcommand{\stt}{\overline{\alphaaa}}
\newcommand{\sttc}{\alphaaa}
\newcommand{\stwi}{\alphaaaw^{(1)}}
\newcommand{\stwii}{\alphaaaw^{(2)}}
\newcommand{\aaa}{\textbf{a}}
\newcommand{\aaaw}{\widetilde{\aaa}}
\newcommand{\aaac}{\overline{\textbf{a}}}
\newcommand{\aaact}{\aaac^\perp}
\newcommand{\aaat}{\aaa^\perp}
\newcommand{\bbb}{\textbf{b}}
\newcommand{\bbbw}{\widetilde{\bbb}}
\newcommand{\bbbt}{\bbb^\perp}
\newcommand{\ccc}{\textbf{c}}
\newcommand{\ccct}{\ccc^\perp}
\newcommand{\geng}{\textbf{g}}
\newcommand{\gengt}{\geng^\perp}
\newcommand{\sss}{\textbf{s}}
\newcommand{\alphaaa}{\textbf{t}}
\newcommand{\alphaaaw}{\widetilde{\alphaaa}}
\newcommand{\III}{\mathfrak{I}}
\newcommand{\JJJ}{\mathfrak{L}}
\newcommand{\AAA}{\mathfrak{A}}
\newcommand{\BBB}{\mathfrak{B}}
\newcommand{\TTT}{T}
\newcommand{\TTTw}{\widetilde{\TTT}}
\newcommand{\FFFw}{\widetilde{H}}
\newcommand{\MMM}{M}
\newcommand{\Laurent}[3]{#1( \! ( #2; #3 )\!)}
\DeclareMathOperator{\diag}{diag}
\DeclareMathOperator{\K}{\scalerel*{\kappa}{C}}
\DeclareMathOperator{\gld}{gld}
\DeclareMathOperator{\glr}{glr}
\DeclareMathOperator{\sr}{sr}
\DeclareMathOperator{\const}{const}
\DeclareMathOperator{\lowest}{lowest}
\NewDocumentCommand{\cycle}{ O{\;} m }
{
	(
	\alec_cycle:nn { #1 } { #2 }
	)
}
\title{Skew Laurent Series Ring Over a Dedekind Domain}
\author{Daniel Vitas}
\address{Department of Mathematics, Faculty of Mathematics and Physics,  University of Ljubljana, Slovenia \newline \indent 
Institute of Mathematics, Physics and Mechanics, Ljubljana, Slovenia}
\email{daniel.vitas@imfm.si}
\thanks{\emph{Mathematics Subject Classification} (2020). 16E60, 16N60, 16P40, 19A49}
\keywords{Skew Laurent series rings, noncommutative Dedekind domains, ideal class groups}
\begin{document}

\begin{abstract} We show that the formal skew Laurent series ring $R = \Laurent{D}{x}{\sigma}$ over a commutative Dedekind domain $D$ with an automorphism $\sigma$ is a noncommutative Dedekind domain. If $\sigma$ acts trivially on the ideal class group of $D$, then $K_0(R)$, the Grothendieck group of $R$, is isomorphic to $K_0(D)$. Furthermore, we determine the Krull dimension, the global dimension, the general linear rank, and the stable rank of $R$.
\end{abstract}

\maketitle

\section{Introduction}

Let $D$ be a commutative ring with an automorphism $\sigma\colon D \rightarrow D$ and let $R = \Laurent{D}{x}{\sigma}$
%= \left\{ \sum_{i = k}^\infty a_i x^i \,\,  \Bigg\vert \,\, a_i \in D, \,\, k \in \Z \right\}$
be the formal skew Laurent series ring over $D$, i.e., the ring that consists of the series $\sum_{i=k}^\infty a_i x^i$ with $k \in \Z$ and $a_i \in D$, and is subject to the rule $x a = \sigma(a) x$ for all $a \in D$. Hilbert used this construction in 1899 to provide the first known example of a centrally infinite division ring. This led several authors, including Dickson, Hahn, Schur, Mal'cev, and Neumann, to study this and similar rings. In \cite[Section 14]{Lam}, Lam gives a detailed account of the early development of the skew Laurent series ring and similar constructions.

%Multiple ring theoretic properties of $D$ lift to $R$.

It is well known that if $D$ is noetherian (resp., an integral domain, a field), then $R$ is also (left and right) noetherian (resp., a domain, a division ring) (see \cite[Example 1.8]{Lam} and \cite{Tu}). Several authors have studied extensions of various properties; Tuganbaev shows that $R$ is right serial right artinian if and only if $D$ is \cite{Tu4}, Romaniv and Sagan show that $R$ is $\omega$-Euclidean if so is $D$ \cite{RS},  %Ziembowski shows that $R$ cannot be semiperfect if $D$ is semiperfect \cite{Zi},
Mazurek and Ziembowski determine when $R$ satisfies an ascending chain condition on principal right ideals \cite{MZ} (while in \cite{MZ2} they also characterize generalized power series rings that are semilocal right Bézout; see also \cite{Sal1, Sal2}), Letzter and Wang determine the Goldie rank of $R$ \cite{LW}, Majidinya and Moussavi study the Baer property of $R$ \cite{Ma1, Ma2}, and Annin describes the associated primes of $R$ \cite{Ann}. However, there are very few established results in the case where $D$ is a Dedekind domain; in the commutative setting (i.e., if $\sigma = {\rm id}$), \cite[Ch 10.3, Ex 3]{Jac} shows that $R$ is also a Dedekind domain (see \cite{PO} for the generalized power series case). 

A similar problem in the noncommutative setting is studied in \cite[Section 7.9]{McR}. It is established that the skew polynomial ring $S = D[x,x^{-1}; \sigma]$, the subring of $R$ consisting of all the series with only finitely many nonzero terms, is a noncommutative Dedekind domain if $D$ is a Dedekind domain.
Furthermore, there is a surjective map
$$ \phi \colon K_0(D) \rightarrow K_0(S)$$
with the kernel generated by $[M^\sigma] - [ M ]$ for all finitely generated projective $D$-modules $M$. Here, $K_0(S)$ denotes the Grothendieck group of $S$, $[ M ]$ denotes the isomorphism class of $M$, and $M^\sigma$ denotes the $\sigma$-skewed module we get from $M$ (see Section \ref{sec1} and \cite[Section 12.5]{McR} for details). Also, note that the Grothendieck group of $D$ splits as
$$ K_0(D) = G(D) \oplus \Z \text{,}$$
where $G(D)$ is the ideal class group of $D$ \cite[Section 35]{LR}.
%In a noncommutative Dedekind domain $S$ every finitely generated projective (right) module $M$ there is a (right) ideal $I_S \leq S$ and an $n \in \N_0$ such that
%$$ M \cong I \oplus S^n \text{.}$$
%Since noncommutative Dedekind domains satisfy a certain cancellation property, we can define a homomorphism
%$$ \psi \colon K_0(S) \rightarrow \Z$$
%mapping $\langle M \rangle$ to $n$, defined above. By $G(S)$ we denote the kernel of $\psi$ and call it the (right) ideal class group of $S$. Just as in the commutative case, we have
%$$ K_0(S) \cong G(S) \oplus \Z \text{.}$$
%Now it is straightforward to check that $\phi \colon K_0(D) \rightarrow K_0(S)$ also gives a morphism from $G(D)$ to $G(S)$.

We say that $\sigma$ \emph{acts trivially on} $G(D)$ if $\sigma(\III)$ is isomorphic to $\III$ for any ideal $\III \lhd D$.  In this paper, we will prove the following theorem. 

\begin{mainthm}\label{mainthm}
	If $D$ is a commutative Dedekind domain with an automorphism $\sigma$, then the skew Laurent series ring $R = \Laurent{D}{x}{\sigma}$ is a noncommutative Dedekind domain. Furthermore, if $\sigma$ acts trivially on $G(D)$, then $$ K_0(R) \cong K_0(D) \text{.}$$
\end{mainthm}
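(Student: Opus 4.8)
The plan is to prove the two assertions in sequence, first establishing that $R$ is a noncommutative Dedekind domain and then computing $K_0(R)$ under the triviality hypothesis on $\sigma$. For the first part, I would proceed by verifying the defining properties of a noncommutative Dedekind domain: that $R$ is a hereditary noetherian prime ring (equivalently, a noetherian domain of global dimension one which is a maximal order in its division ring of fractions). Since $D$ is noetherian and a domain, the cited results guarantee $R$ is a noetherian domain. The key structural input is that $R$ is a \emph{local} ring: every series $\sum_{i=k}^\infty a_i x^i$ with $a_k \neq 0$ factors as $x^k$ times a unit whenever $a_k$ is a unit in $D$, so the non-units form a well-understood set governed by the valuation $v(f) = $ the smallest index $i$ with $a_i$ a non-unit. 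I would exploit this valuation-like filtration, together with the fact that $D$ itself is Dedekind (hence hereditary of global dimension one), to transfer the hereditary and maximal-order properties from $D$ to $R$. The main obstacle in this first part is handling the noncommutativity carefully when lifting factorization and ideal-theoretic arguments: one must check that the skewing rule $xa = \sigma(a)x$ does not obstruct the one-sided ideal structure, and that left and right global dimensions agree.

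For the second statement, the natural strategy is to relate $K_0(R)$ to $K_0(S)$ for the skew polynomial (Laurent) subring $S = D[x,x^{-1};\sigma]$, for which the excerpt already records the surjection $\phi\colon K_0(D) \to K_0(S)$ with kernel generated by the classes $[M^\sigma] - [M]$. Under the hypothesis that $\sigma$ acts trivially on $G(D)$, we have $\III \cong \sigma(\III)$ for every ideal, which should force $[M^\sigma] = [M]$ for all finitely generated projective $D$-modules $M$; hence the kernel of $\phi$ is trivial and $K_0(S) \cong K_0(D)$. I would then compare $K_0(R)$ with $K_0(S)$ directly. The plan is to show that the inclusion $S \hookrightarrow R$ induces an isomorphism on $K_0$. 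Intuitively, $R$ is a completion of $S$ with respect to the $x$-adic filtration, and for such completions every finitely generated projective $R$-module should descend to a projective $S$-module: one lifts idempotent matrices over $R$ back to $S$ using the local structure of $R$ established in the first part.

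The concrete mechanism I would use is the local nature of $R$. Because $R$ is local with maximal ideal $\MMM$ consisting of the non-units, finitely generated projective $R$-modules are free (idempotents over a local ring are trivial up to conjugation), so $K_0(R) \cong \Z$ would follow \emph{if} $R$ were local in the classical sense; but one must be careful, since $R$ is only local relative to the unit group of $D$, not an honest local ring unless $D$ is. The honest statement to aim for is that the reduction map $R \to D$ sending $x \mapsto 0$ (more precisely, the quotient by the ideal generated by $x$ and its inverse considerations) or an appropriate lifting argument yields a natural isomorphism $K_0(R) \cong K_0(D)$. I would set up a commutative diagram
\begin{equation*}
\begin{array}{ccc}
K_0(D) & \xrightarrow{\;\phi\;} & K_0(S) \\
\big\| & & \big\downarrow \\
K_0(D) & \xrightarrow{\;\cong\;} & K_0(R)
\end{array}
\end{equation*}
and argue that the right vertical map is an isomorphism by lifting projectives along the $x$-adic filtration, while the bottom map is induced by $D \hookrightarrow R$.

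The step I expect to be the genuine obstacle is proving that the inclusion $S \hookrightarrow R$ (or equivalently $D \hookrightarrow R$) induces an isomorphism on $K_0$, i.e., that passing from the Laurent \emph{polynomial} ring to the Laurent \emph{series} ring does not change the Grothendieck group. This requires a completion-style lifting theorem for finitely generated projective modules over a noncommutative filtered ring, analogous to the commutative statement that $K_0$ is invariant under $I$-adic completion for $I$ in the Jacobson radical. The delicate points are verifying that the relevant ideal lies in the Jacobson radical of $R$ (which should follow from the local structure, since $1 + \MMM$ consists of units), that the filtration is complete and separated, and that idempotent-lifting works in the noncommutative setting; I would most likely invoke or adapt a standard result on $K_0$ of complete filtered rings (e.g.\ via the isomorphism $K_0(R) \cong K_0(R/J(R))$ when $R$ is complete with respect to a filtration by powers of an ideal contained in $J(R)$) and then identify $R/J(R)$ with a ring whose $K_0$ is visibly $K_0(D)$.
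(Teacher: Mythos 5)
There is a genuine gap, and it is fatal to the proposed mechanism for both halves. The element $x$ is \emph{invertible} in $R = \Laurent{D}{x}{\sigma}$: the two-sided ideal generated by $x$ is all of $R$, there is no ring homomorphism $R \to D$ sending $x \mapsto 0$, and $R$ carries no $x$-adic filtration by proper ideals, so $R$ is not a completion of $S = D[x,x^{-1};\sigma]$ at an ideal in the Jacobson radical. (The correct picture is that $R$ is the localization at the powers of $x$ of the power series ring $D[[x;\sigma]]$; idempotent lifting does give $K_0(D[[x;\sigma]]) \cong K_0(D)$, but the subsequent localization is exactly where $K_0$ can change.) Concretely, $J(R)$ can vanish: for $D = \Z$ and each prime $p$, the ideal $pR$ is maximal with $R/pR \cong \F_p(\!(x)\!)$, so $J(R) \subseteq \bigcap_p pR = 0$ and your invocation of $K_0(R) \cong K_0(R/J(R))$ is vacuous. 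Your claim that $R$ is local fails for the same reason: the non-units do not form an ideal, since for any nonzero non-unit $\pi \in D$ the sum $(\pi + x) + (-\pi) = x$ is a unit; $R$ is local only when $D$ is a field (and is then a division ring), which also undermines the valuation-style route to hereditariness in your first part. Finally, there is a structural reason your $K_0$ argument cannot be repaired as stated: left multiplication by $x^{-1}$ gives $\sigma(\III)R \simeq \III R$ as right $R$-modules, so the natural map $K_0(D) \to K_0(R)$ induced by $D \hookrightarrow R$ (your bottom arrow) always kills $[\sigma(\III)] - [\III]$. Any proof of its injectivity that never uses the hypothesis on $\sigma$ — and your completion/lifting mechanism does not use it — would prove that every automorphism acts trivially on $G(D)$, which is false. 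The hypothesis must enter the comparison between $K_0(D)$ and $K_0(R)$ itself, not merely the identification $K_0(S) \cong K_0(D)$.

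The missing idea, which the paper supplies, is Proposition \ref{extension lemma}: every right ideal $I_R \leq R_R$ is isomorphic to $\III R$ with $\III = \const(I)$, proved by an inductive construction of matrices $A_i$ and a change of generators over $Q = \Laurent{K}{x}{\sigma}$. From this the paper gets everything in the first half at once: noetherianity, hereditariness (via flatness of $R$ as a left $D$-module, so $\III R \cong \III \otimes_D R$ is projective), and the Asano condition (combining the right and left versions to invert two-sided ideals), whereas your sketch offers no mechanism for the hereditary and maximal-order/Asano properties beyond the false local structure. For the second half, Proposition \ref{extension lemma} gives surjectivity of $\phi \colon G(D) \to G(R)$, and the genuine core — injectivity — is an explicit cancellation argument: a double induction (Lemmas \ref{lemma uni}, \ref{lemma inv}, \ref{induction}) showing that every unimodular row $\aaa \in \begin{bmatrix} R & I^{-1} \end{bmatrix}$ is invertible, whence stably isomorphic ideals are isomorphic (Lemma \ref{stab imp iso}); the hypothesis that $\sigma$ acts trivially on $G(D)$ is used essentially there, both to choose $\lambda$ with $\lambda D = \III \BBB^{-1} \sigma(\III^{-1}\BBB)$ and in the final step $qI = J \Rightarrow \sigma^k(\III) \simeq \JJJ \Rightarrow \III \simeq \JJJ$. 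Your proposal contains no substitute for this cancellation argument, and no $K$-theoretic formalism of the kind you invoke can bypass it, since stable isomorphism does not imply isomorphism over general noncommutative Dedekind domains (e.g.\ the first Weyl algebra).
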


We will actually show something more about the ideal structure of $R$, namely, we will prove that every right ideal $I_R \leq R_R$ is isomorphic to the extended ideal $\III R$ for some $\III \lhd D$ (Proposition \ref{extension lemma}); yet not every right ideal of $R$ is of this form (e.g., the ideal $(2+x)R$ for $D = \Z$). With this we also get the following theorem.

\begin{mainthm}\label{mainthm2}
Let $D$ be a commutative Dedekind domain with an automorphism $\sigma$ that acts trivially on $G(D)$, and let $R = \Laurent{D}{x}{\sigma}$. Then any two stably isomorphic finitely generated projective $R$-modules are isomorphic.
\end{mainthm}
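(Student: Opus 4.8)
The plan is to prove the stronger structural statement that every finitely generated projective right $R$-module is, up to isomorphism, extended from $D$, and then to transport the classical cancellation property of the Dedekind domain $D$ across the isomorphism $K_0(R)\cong K_0(D)$ furnished by Theorem \ref{mainthm}. For a finitely generated projective $D$-module $\MMM$, write $\MMM\otimes_D R$ for its extension of scalars; note that if $\MMM$ is a direct summand of $D^m$ then $\MMM\otimes_D R$ is a direct summand of $R^m$, so extension sends finitely generated projective $D$-modules to finitely generated projective $R$-modules and induces a homomorphism $K_0(D)\to K_0(R)$, $[\MMM]\mapsto[\MMM\otimes_D R]$.

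First I would describe the finitely generated projective $R$-modules. Since $R$ is a Dedekind prime ring by Theorem \ref{mainthm}, the structure theory of projectives over such rings (the noncommutative Steinitz theorem, \cite[Ch.~5]{McR}) shows that every nonzero such module $P$, of rank $n$, splits as $P\cong R^{\,n-1}\oplus I$ for some right ideal $I$ of $R$. By the extension lemma (Proposition \ref{extension lemma}), $I\cong \III R$ for some ideal $\III\lhd D$. Putting $\MMM=D^{\,n-1}\oplus\III$, a finitely generated projective $D$-module, gives $\MMM\otimes_D R\cong R^{\,n-1}\oplus\III R\cong P$. Hence every finitely generated projective $R$-module is isomorphic to one of the form $\MMM\otimes_D R$.

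Now suppose $P$ and $Q$ are stably isomorphic, say $P\oplus R^{k}\cong Q\oplus R^{k}$, so that $[P]=[Q]$ in $K_0(R)$, and write $P\cong\MMM\otimes_D R$ and $Q\cong\MMM'\otimes_D R$ as above. Because $\sigma$ acts trivially on $G(D)$, Theorem \ref{mainthm} gives an isomorphism $K_0(D)\xrightarrow{\ \sim\ }K_0(R)$ induced by extension of scalars, under which $[\MMM]\mapsto[\MMM\otimes_D R]$. Injectivity of this map turns $[\MMM\otimes_D R]=[\MMM'\otimes_D R]$ into $[\MMM]=[\MMM']$ in $K_0(D)$. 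Over the commutative Dedekind domain $D$, equal classes in $K_0(D)=\Z\oplus G(D)$ mean that $\MMM$ and $\MMM'$ are stably isomorphic, hence isomorphic by the Steinitz classification \cite[Section 35]{LR} (rank and determinant are complete invariants). Applying $-\otimes_D R$ to $\MMM\cong\MMM'$ then yields $P\cong Q$.

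The step that needs the most care—and the one I expect to be the main obstacle—is verifying that the abstract isomorphism $K_0(R)\cong K_0(D)$ of Theorem \ref{mainthm} is genuinely realized by extension of scalars (equivalently, that its inverse sends $[\III R]$ to $[\III]$), since this is what lets equality of classes in $K_0(R)$ be pulled back to $K_0(D)$; this is exactly where the hypothesis on $\sigma$ enters, through the computation behind Theorem \ref{mainthm}. I note that the finite stable rank of $R$ alone does not suffice here: Bass cancellation would remove only part of the free summand $R^{k}$ and would leave the rank-one case untouched, so the splitting $P\cong R^{\,n-1}\oplus I$ over $R$ together with Proposition \ref{extension lemma} is essential to reach the isomorphism.
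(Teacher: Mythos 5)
Your proposal is correct and takes essentially the same route as the paper: both arguments decompose a projective as $I \oplus R^{n}$ via \eqref{ideal property}, replace $I$ by $\III R$ via Proposition \ref{extension lemma}, and reduce everything to the injectivity of $\phi$ (Theorem \ref{G theorem}) together with the commutative Steinitz fact that stable isomorphism implies isomorphism over $D$; your $K_0$-level bookkeeping simply packages the paper's uniform-dimension step ($n=m$) and its $\ker\phi$ step into one. The point you flag as delicate --- that the isomorphism $K_0(R) \cong K_0(D)$ is realized by extension of scalars --- is exactly what the body of the paper provides, since $\phi([\III]) = [\III R]$ is the restriction of $[\MMM] \mapsto [\MMM \otimes_D R]$ and the rank homomorphisms $\psi$ correspond, so there is no gap.
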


Note that this is not in general true for all noncommutative Dedekind domains; the first Weyl algebra $A_1(k)$ is a simple noncommutative Dedekind domain, but has stably free modules that are not free \cite[Corollary 11.2.11]{McR}. In the setting of classical maximal orders in central simple algebras
over number fields the situation is subtle: stable isomorphism implies isomorphism unless the algebra is a totally definite quaternion algebra. (This is a consequence of strong approximation.) In the exceptional case of totally definite quaternion algebras, on the other hand, there are only finitely many instances where stable isomorphism still implies isomorphism. These have all been classified \cite{Sm3}.

Finally, we also compute a few invariants of $R$; namely, its right Krull dimension $\K(R)$, right global dimension $\gld(R)$, general linear rank $\glr(R)$, and stable rank $\sr(R)$.

\begin{mainthm}
Let $D$ be a commutative Dedekind domain, that is not a field, with an automorphism $\sigma$, and let $R = \Laurent{D}{x}{\sigma}$. Then $\K(R) = 1$, $\gld(R) = 1$, and $\sr(R) = 2$. If $\sigma$ acts trivially on $G(D)$, then $\glr(R)=1$.
\end{mainthm}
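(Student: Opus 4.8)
I would treat the four invariants separately, in each case pairing an easy bound from general theory with a sharper bound special to $R$.

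The global dimension is immediate from Theorem A. Since $R$ is a noncommutative Dedekind domain it is hereditary, so $\gld(R)\le 1$; and $\gld(R)=0$ would force $R$ to be semisimple, hence, being a domain, a division ring, which happens only when $D$ is a field. As $D$ is not a field, $\gld(R)=1$. The same dichotomy produces the lower bound $\K(R)\ge 1$: a right artinian domain is a division ring (an element $a\ne 0$ gives a stabilising chain $aR\supseteq a^2R\supseteq\cdots$, whence $a$ is a unit), so $R$ is not artinian and its Krull dimension is positive.

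For the upper bound $\K(R)\le 1$ the plan is to show that $R/I$ is artinian for every nonzero right ideal $I$, and then to invoke the descending-chain characterisation of Krull dimension at most one. Since $R$ is a noetherian, hence right Ore, domain, every nonzero right ideal is essential; so in any descending chain $R=I_0\supseteq I_1\supseteq\cdots$ each factor $I_j/I_{j+1}$ embeds in $R/I_{j+1}$, which is artinian as long as $I_{j+1}\ne 0$, and thus all but at most one factor is artinian, giving $\K(R_R)\le 1$. The substance is therefore the finite length of $R/I$. Here I would appeal to the structure theory of finitely generated torsion modules over a Dedekind prime ring, by which such modules have finite length, applied to the cyclic torsion module $R/I$. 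The point to watch is that one cannot reduce to the extended ideals of Proposition \ref{extension lemma}: a right ideal need not contain any $\III R$ with $0\ne\III\lhd D$ (for $D=\Z$ the ideal $(2+x)R$ meets $\Z$ only in $0$), so the artinian quotients $R/\III R\cong \Laurent{(D/\III)}{x}{\sigma}$ — artinian because $D/\III$ is serial artinian, by the theorem of Tuganbaev quoted in the introduction — do not by themselves govern $R/I$.

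For the stable rank the upper bound $\sr(R)\le \K(R)+1=2$ is a direct application of the noetherian stable range theorem $\sr\le\K+1$. The lower bound $\sr(R)\ge 2$ is the main obstacle, and I expect it to be the hardest point in the whole theorem. I would establish it by exhibiting a unimodular row $(a,b)$ of length two that cannot be shortened, that is, with $aR+bR=R$ but $a+bt\notin R^\times$ for every $t\in R$. The mechanism rests on the unit criterion for $R$: a series is a unit precisely when its lowest nonzero coefficient is a unit of $D$. Concretely I would choose a maximal ideal $\mathfrak q\lhd R$ whose residue field carries the valuation induced by the $x$-order in an essential way — over $\Z$ the model is $\mathfrak q=(p+x)R$, with residue field $\Q_p$ — then set $b$ to generate $\mathfrak q^2$ and let $a$ be a lift of a one-unit in $1+\mathfrak q/\mathfrak q^2$ of negative $x$-order; the unit criterion then forces every representative $a+bt$ to have non-unit leading coefficient, so $\bar a$ is a unit of $R/\mathfrak q^2$ that is not the image of any unit of $R$. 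The delicate work is to carry this out uniformly: when $D$ has infinitely many maximal ideals one may instead lift a Bass obstruction row from $D$ (a short case analysis on the $x$-order of $t$ shows it stays non-reducible), but when $D$ is local, so $\sr(D)=1$, the obstruction cannot be imported from $D$ and must be manufactured from $x$, which requires verifying the maximality of $\mathfrak q$ and the locality of $R/\mathfrak q^2$ for general $D$ and $\sigma$.

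Finally, under the hypothesis that $\sigma$ acts trivially on $G(D)$, the general linear rank $\glr(R)=1$ should follow from Theorem \ref{mainthm2}. It suffices to show that any two unimodular rows of the same length $n\ge 2$ are equivalent under $GL_n(R)$. For $n\ge 3$ this is immediate from $\sr(R)=2$; for $n=2$ the kernel of a unimodular row is a stably free right $R$-module of rank one, which is free by Theorem \ref{mainthm2}, so the row completes and the two rows are equivalent. Hence $\glr(R)=1$, and the only genuinely hard step remains the lower bound $\sr(R)\ge 2$ in the local case described above.
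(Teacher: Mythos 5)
Most of your proposal lines up with the paper's proof, sometimes with citations unpacked: the paper gets $\gld(R)=1$ and $\K(R)=1$ exactly as you do (hereditary gives the upper bounds; $R$ is not artinian because $D$ is not a field), it cites \cite[Corollary 6.2.8]{McR} for $\K(R)\le 1$ --- which is essentially the finite-length theorem for finitely generated torsion modules over hereditary noetherian prime rings that you re-derive via the descending-chain criterion --- it cites \cite[Corollary 6.7.4]{McR} for $\sr(R)\le\K(R)+1=2$, and it obtains $\glr(R)=1$ from \cite[Proposition 11.1.12]{McR} together with the corollary that stably isomorphic projectives are isomorphic, which is the same content as your row-completion argument. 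Your side remark that a nonzero right ideal need not contain any extended ideal $\III R$ (e.g.\ $(2+x)R$ over $\Z$) is correct and consistent with the paper.

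The genuine gap is at the one step you yourself flag: the lower bound $\sr(R)\ge 2$. Your lifting argument covers only those $D$ admitting a non-reducible unimodular pair, i.e.\ $\sr(D)=2$; when $D$ is semilocal (in particular local) one has $\sr(D)=1$, and your substitute construction --- a maximal ideal $\mathfrak q$ with $(p+x)R$ as model, a generator of $\mathfrak q^2$, a lift of a one-unit of negative $x$-order --- is left unverified on exactly the points that matter (maximality of $\mathfrak q$, principality of $\mathfrak q^2$, the structure of $R/\mathfrak q^2$ for general $D$ and $\sigma$), and it is not clear it survives $\sigma\ne\mathrm{id}$, where $(p+x)R$ need not behave as in the $\Z$ model. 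So as written the hardest claim of the theorem is not proved. The paper needs none of this machinery and, in particular, no case split on the number of maximal ideals of $D$: for any nonzero nonunit $a\in D$, set $b=1+\sigma(a)$ and take the row $\begin{bmatrix} a+x & a^2+bx\end{bmatrix}$. It is unimodular because $(a+x)a-(a^2+bx)=(\sigma(a)-b)x=-x$ is a unit of $R$; and for any $r=\sum_{i=k}^{\infty}r_i x^i\neq 0$ with $r_k\neq 0$, the lowest coefficient of $(a+x)+(a^2+bx)r$ equals $a^2 r_k$ if $k<0$, $a(1+ar_0)$ if $k=0$, and $a$ if $k>0$ --- in every case a nonzero nonunit of $D$, so by the unit criterion you state (a Laurent series is invertible iff its lowest coefficient is a unit of $D$) no reduction $a{+}x+(a^2{+}bx)r$ is ever invertible. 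This two-line computation closes uniformly, for all $D$ and all $\sigma$, precisely the local case your sketch leaves open; replacing your maximal-ideal construction by such an explicit row repairs the proposal.
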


\section{Preliminaries} \label{sec1}

Let $D$ be a commutative domain and $K$ its field of fractions. We say that $D$ is a \emph{Dedekind domain} if every nonzero ideal $\III \lhd D$ is invertible, i.e., there exists a finitely generated $D$-module $\JJJ \subseteq K$ (we call such modules \emph{fractional ideals}) such that $\III \JJJ = D$. Throughout this paper, $D$ will always denote a commutative Dedekind domain. Let $\sigma \colon D \rightarrow D$ be an automorphism of $D$. The skew Laurent series ring over $D$ is the set
$$ R = \Laurent{D}{x}{\sigma}= \Big\{ \,\sum_{i = k}^\infty a_i x^i \,\,  \Big\vert \,\, a_i \in D, \,\, k \in \Z \, \Big\}$$
with addition defined term-wise and multiplication defined by using the distributive property and the formula
$$ x^i a = \sigma^i (a) x$$
for any $a \in D$ and $i \in \Z$.
We will often write $Q$ for $\Laurent{K}{x}{\sigma}$ (note that this is not the ring of fractions of $R$, but it is a division ring containing $R$).
%
% Additionally, let
% $$ D \llbracket x; \sigma \rrbracket =\left\{ \sum_{i = 0}^\infty a_i x^i \,\,  \Bigg\vert \,\, a_i \in D  \right\} \text{,}$$
% be a subring of $R$, consisting of all the series involving only the terms with a positive power of $x$. 

For a Laurent series
\begin{align} \label{right-form}
	f = \sum_{i=k}^\infty a_i x^i \in R
\end{align}
with $a_k \neq 0$ we define the \emph{lowest coefficient of $f$} to be
$ \lowest(f) = a_k$ and write $\lowest(f) = 0$ if $f = 0$.
For a right ideal $I_R \leq R_R$ the set
$$  \const(I) = \left\{ \, \lowest(f)  \,\, \middle\vert \,\, f \in I \, \right\} $$
is an ideal of $D$; we call it \emph{the constant ideal of $I$}. The ideal $\const(I)$ plays an important role in studying the ideal $I$.

\begin{remark}
	We will establish our first result only for right ideals. To invoke the left-right symmetry and use the same result for left ideals, we have to adjust our terms a little bit. We can write every element of $R$ as in \eqref{right-form} in the \emph{left normal form} as
	$$ f = \sum_{i=k}^\infty x^i b_i$$
	for $b_i = \sigma^{-i}(a_i)$. It is easy to check that elements in the left normal form multiply symmetrically to those in $R$ (except applying $\sigma^{-1}$ instead of $\sigma$ to the elements in $D$). By defining the \emph{\textup{(}left\textup{)} lowest coefficient of $f$} to be $\lowest_l(f) = \sigma^{-k}(a_k)$ and writing
	$$\const_l(J) =  \left\{ \, \lowest_l(f)  \,\, \middle\vert \,\, f \in I  \, \right\}$$
	for a left ideal $\prescript{}{R}{J} \leq \prescript{}{R}{R}$, we obtain a symmetric notion to $\const(I)$ defined for a right ideal $I_R \leq R_R$. In this context, applying all the steps in future proofs symmetrically from the left will yield a symmetric result for the left ideals, despite the fact that the proofs are only written for the right ideals.
\end{remark}

The following lemma is found in \cite[Lemma 4.3]{Tu} and will be important later on.
In particular, it already shows that if $D$ is a PID, then $R$ is also a PID.

\begin{lemma} \label{gen lemma}
	%Assume that $\sigma$ has no finite orbits on ${\rm Spec \, } D$.
	Let $I_R \leq R_R$ be a right ideal and let 
	$$ \const(I) = a_1 D + \dots + a_n D$$
	for some $a_i \in D$. Then there exist $f_1, \ldots, f_n \in R$ with $\lowest(f_i) = a_i$ such that
	$$ I = f_1 R + \dots + f_n R \text{.}$$
\end{lemma}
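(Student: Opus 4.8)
The plan is to prove the two inclusions separately, with the forward inclusion $f_1 R + \dots + f_n R \subseteq I$ being immediate and the reverse inclusion $I \subseteq f_1 R + \dots + f_n R$ requiring a successive-approximation argument that exploits the completeness of the Laurent series ring. First I would choose the generators: since each $a_i$ lies in $\const(I)$, by definition of the constant ideal there exists $f_i \in I$ with $\lowest(f_i) = a_i$, say $f_i = a_i x^{k_i} + (\text{higher order terms})$. Because $I$ is a right ideal, $f_i R \subseteq I$ for each $i$, so $f_1 R + \dots + f_n R \subseteq I$, which disposes of the easy direction.

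For the reverse inclusion I would fix an arbitrary $g \in I$ and construct, by an iterative procedure, elements $h_i \in R$ with $g = \sum_i f_i h_i$. The idea is to cancel the lowest term of $g$ one degree at a time. Suppose at stage $j$ we have produced $g_j \in I$ (with $g_0 = g$) whose lowest term is $c_j x^{m_j}$. Since $c_j = \lowest(g_j) \in \const(I) = \sum_i a_i D$, we may write $c_j = \sum_i a_i d_i^{(j)}$ with $d_i^{(j)} \in D$. Setting the monomial $h_i^{(j)} = \sigma^{-k_i}(d_i^{(j)})\, x^{m_j - k_i}$, a direct computation using $x^k a = \sigma^k(a) x^k$ shows that the lowest term of $f_i h_i^{(j)}$ is $a_i d_i^{(j)} x^{m_j}$, so $\sum_i f_i h_i^{(j)}$ has lowest term $c_j x^{m_j}$, matching that of $g_j$. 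Hence $g_{j+1} := g_j - \sum_i f_i h_i^{(j)}$ lies in $I$ and has strictly larger lowest degree, that is, $m_{j+1} > m_j$.

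Finally I would assemble the solution and verify convergence, which is the crux of the argument. Since the lowest degrees $m_j$ strictly increase, we have $m_j \to \infty$, so the monomials $h_i^{(j)}$ (of degree $m_j - k_i$) have degrees tending to $+\infty$; consequently $h_i := \sum_{j \geq 0} h_i^{(j)}$ is a well-defined element of $R$, bounded below in degree by $m_0 - k_i$. Telescoping gives $\sum_i f_i \bigl( \sum_{j=0}^{N} h_i^{(j)} \bigr) = g - g_{N+1}$, and since $g_{N+1}$ has lowest degree $m_{N+1} \to \infty$, it tends to $0$ in the $x$-adic topology. The main obstacle is to make this limiting step rigorous: I would invoke the completeness of $R$ with respect to the $x$-adic filtration together with the continuity of multiplication to conclude $\sum_i f_i h_i = \lim_{N} (g - g_{N+1}) = g$. (If the process terminates because some $g_j = 0$, then the $h_i$ are genuine finite sums and no analytic input is needed.) This yields $g \in f_1 R + \dots + f_n R$ and completes the proof.
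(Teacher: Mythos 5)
Your proof is correct: the choice of $f_i$ with $\lowest(f_i)=a_i$ (taking $f_i=0$ if $a_i=0$), the lowest-term cancellation giving $m_{j+1}>m_j$, and the telescoping step are all sound, and the limiting argument is rigorous since all partial sums live in a fixed $R_{m_0-k_i}\simeq \prod_{i \geq m_0 - k_i} D$, which is complete with respect to the filtration by degree --- alternatively one can check $\sum_i f_i h_i = g$ coefficientwise, as each coefficient receives only finitely many contributions. The paper does not prove this lemma but quotes it from Tuganbaev \cite[Lemma 4.3]{Tu}, and your successive-approximation scheme is essentially that standard argument, so your proposal matches the intended proof.
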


Let us recall a few notions. Let $S$ be an arbitrary (unital) ring. We say that $S$ is a \emph{noncommutative Dedekind domain} if it is a domain, it is noetherian (i.e., both left and right ideals are finitely generated), it is hereditary (i.e., both left and right ideals are projective), and it is an Asano order (i.e., two sided nonzero ideals are both left and right invertible); see \cite[Definition 23.5]{LR}. (The left hereditary property follows from the right hereditary property, as long as the ring $S$ is left and right noetherian  \cite[Corollary 7.65]{Lam2}.)
Note that for a commutative ring $S$, this notion coincides with the standard notion of a commutative Dedekind domain.

This type of rings admits factorization properties of modules, similar to those of commutative Dedekind domains (see \cite[Section 5.7]{McR} or \cite[Sections 33, 35]{LR}). In particular, every finitely generated module is a direct sum of a projective module and a torsion module. Furthermore, every finitely generated projective module $M$ is isomorphic to
\begin{align} \label{ideal property}
	M \simeq I \oplus S^n
\end{align}
for some right ideal $I_S \leq S_S$ and $n \in \N_0 = \N \cup \{ 0\}$.
Noncommutative Dedekind domains also admit a cancellation property. More precisely, if
$$ I \oplus S^n \simeq J \oplus S^n$$
for some right ideals $I$ and $J$ and $n \in \N$, then
\begin{align} \label{canc property}
	I \oplus S \simeq J \oplus S \text{.}
\end{align}
See \cite[Corollary 11.7.14]{McR} for details.

For any ring $S$, not necessarily a Dedekind domain, we say that two right $S$-modules $M_S$ and $N_S$ are \emph{stably isomorphic} if
$$ M \oplus S^n \simeq N \oplus S^n$$
for some $n \in \N_0$.
In general, the Grothendieck group $K_0(S)$ of any ring $S$ is a group generated by stable isomorphism classes of finitely generated projective (right) modules $M_S$ (we denote the stable isomorphism class of $M$ by $[M]$), where the addition is given by
$$ [ M ] +[ N ] = [ M \oplus N ] \text{.}$$
For a noncommutative Dedekind domain $S$, we can define the homomorphism
$$\psi \colon K_0(S) \rightarrow \Z \text{,}$$
mapping an element of the form $[I \oplus S^n] - [ J \oplus S^m]$ for some nonzero ideals $I$ and $J$ to $n-m$. The kernel of $\psi$ consists of elements of the form $[I]-[S]$ (this follows from \cite[Theorem 11.7.13(ii)]{McR}), where $I$ is a right ideal of $S$; we denote it by
$$ G(S) = \ker \psi$$
and call it \emph{the (right) ideal class group} of $S$. Since $\Z$ is projective, the morphism $\psi$ splits and we have
$$ K_0(S) \cong G(S) \oplus \Z \text{.}$$
Note that if $S$ is a commutative ring, then $G(S)$ coincides with the usual ideal class group. See \cite[Section 12.1]{McR} for details.

\section{Extension Proposition}

Recall that $D$ is a commutative Dedekind domain with an automorphism $\sigma \colon D \rightarrow D$ and $R = \Laurent{D}{x}{\sigma}$ is the skew Laurent series ring. This section is dedicated to proving the following proposition.

\begin{proposition} \label{extension lemma}
	Let $I_R \leq R_R$ be a right ideal. Then
	$$ I \simeq \III R$$
	as right $R$-modules, where $\III = \const(I) \lhd D$.
\end{proposition}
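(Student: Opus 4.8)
The plan is to first pin down $\III R$ very explicitly, then to show that $I$ and $\III R$ carry the same ``leading-term data'', and finally to lift this coincidence to an honest $R$-module isomorphism by successive approximation, the key resource being that $R$ is complete for the filtration by lowest degree. To set up, I would use Lemma \ref{gen lemma} to write $\III = \const(I) = a_1 D + \dots + a_n D$ (the Dedekind property lets me take $n = 2$, which I may do to lighten the bookkeeping) and fix $f_1, \dots, f_n \in I$ with $\lowest(f_i) = a_i$; multiplying by a suitable power of the unit $x$ I may assume each $f_i = a_i + (\text{terms of strictly higher degree})$. The first thing I would record is the coefficientwise description
\[ \III R = \Big\{ \sum_{i \geq k} c_i x^i \in R : c_i \in \III \text{ for all } i \Big\}, \]
which holds because every coefficient of a product $\alpha g$ with $\alpha \in \III$ lands in $\III$, while conversely any series whose coefficients all lie in $\III$ can be split term by term into a finite $D$-combination of the generators $a_i$, choosing the representations to vanish below the starting degree. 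In particular $\const(\III R) = \III = \const(I)$.

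Next I would observe that $I$ and $\III R$ share all of their leading-term data. Since $x$ is a unit, multiplication by $x^{\pm 1}$ shifts degrees while preserving $\lowest$; hence for every $k$ the set of degree-$k$ leading coefficients of the degree-$\geq k$ elements of $I$ is exactly $\III$, and the same holds for $\III R$. Consequently $I$ and $\III R$ have identical associated graded modules for the lowest-degree filtration, both being the extension of $\III$ to the associated graded ring, which is the skew Laurent polynomial ring $D[x,x^{-1};\sigma]$. The entire task is to promote this equality of symbols to an $R$-module isomorphism.

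Concretely, I would aim to construct a right $R$-module homomorphism $\Phi \colon I \to \III R$ with $\lowest(\Phi(u)) = \lowest(u)$ for all $u \in I$. Granting such a $\Phi$, injectivity is immediate, since $\Phi(u) = 0$ forces $\lowest(u) = 0$ and hence $u = 0$; and surjectivity follows by approximation: given $w \in \III R$, pick $u_0 \in I$ with the same lowest coefficient and degree as $w$ (possible because $\lowest(w) \in \III = \const(I)$), replace $w$ by $w - \Phi(u_0)$, which again lies in $\III R$ but has strictly larger lowest degree, and iterate. The resulting series converges because $\III R$ (by its explicit description) and $I$ are closed, hence complete, in the lowest-degree topology, and the limit is a preimage of $w$.

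The one genuine difficulty, and the step I expect to be the main obstacle, is the construction of $\Phi$ itself. The naive assignment $f_i \mapsto a_i$ does not descend to $I$, because it fails to respect the relations among the $f_i$: from $\sum f_i r_i = 0$ one only gets that $\sum a_i r_i = -\sum (f_i - a_i) r_i$ has strictly higher degree, not that it vanishes. I would therefore build $\Phi$ as the limit of a Newton-type iteration on the free cover $R^n \to \III R$, starting from $e_i \mapsto a_i$ and correcting, degree by degree, the discrepancy this map produces on the relation module $\{(r_i) : \sum f_i r_i = 0\}$. The subtlety is that every correction must itself have all coefficients in $\III$, and the crux is precisely that corrections subject to this constraint can always be found: this is where I would use that $\III$ is invertible over $D$ (equivalently projective, so that the relevant extension problems for the graded pieces over $D[x,x^{-1};\sigma]$ are solvable) together with the completeness of $R$ to guarantee both the existence of each correction and the convergence of the iteration to an $R$-linear $\Phi$ with the required leading-term behaviour. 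Everything else is routine.
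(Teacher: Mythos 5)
Your outer scaffolding is fine and in fact runs parallel to the paper: the coefficientwise description of $\III R$, injectivity from preservation of lowest terms, and surjectivity by degree-by-degree stripping (together with closedness of $I$, which is essentially a re-run of the proof of Lemma \ref{gen lemma}) all work, and any $\Phi$ of the kind you want is forced to be left multiplication by some $q = 1 + q_1x + q_2x^2 + \cdots \in Q$: since $R$ is a noetherian (hence Ore) domain there is a nonzero relation $f_1r_1 + f_2r_2 = 0$, and well-definedness of $\Phi$ on it forces $\Phi(f_2) = \Phi(f_1)f_1^{-1}f_2$ in $Q$. So your task is exactly the paper's: construct the coefficients $q_n \in K$ so that $qf_i \in \III R$ for $i = 1,2$, i.e., so that $\sum_{j=0}^{n} q_j\sigma^j(a_{i,n-j}) \in \III$ for all $n$ (writing $f_i = a_i + \sum_{j\geq 1} a_{ij}x^j$ and $a_{i0}=a_i$). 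At stage $n$ this is a simultaneous congruence $q_n\sigma^n(a_i) \equiv -c_{in} \pmod{\III}$, $i=1,2$, with one unknown and two conditions, and eliminating $q_n$ shows that solvability requires the ideal membership
$$ \sum_{j=0}^{n-1} q_j\,\sigma^j(\geng_{n-j})\,\sigma^n(\gengt_0) \in \III\,\sigma^n(\III) \text{,}$$
membership in the \emph{product} of $\III$ with its $\sigma^n$-twist, not merely in $\III$. This is the genuine content of the proposition, and your proposal replaces it with the assertion that projectivity of $\III$ makes "the relevant extension problems for the graded pieces" solvable. That appeal cannot work as stated: projectivity (equivalently invertibility) of $\III$ is degree-independent, while the obstruction ideal $\III\,\sigma^n(\III)$ changes with $n$ through the automorphism; the associated graded picture over $D[x,x^{-1};\sigma]$ sees none of this, and without the displayed membership your Newton iteration simply stalls at the first $n$ where the two congruences are incompatible.

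For comparison, the paper fills this gap with two specific inputs. First, a relation-module computation (Lemma \ref{lemma s_i}): if columns $\sss_0,\dots,\sss_{n-1}$ satisfy the partial syzygy conditions, then the degree-$n$ coefficient of $\geng\sss$ lies in $\const(I) = \III$, simply because $\geng\sss \in I$ — this is where the fact that the $f_i$ generate $I$, which you correctly identify as the source of the difficulty, actually gets used. Second, the twist trick: testing with $\sss_i = A_i\sigma^i(\gengt_0)\sigma^i(t)$ for \emph{all} $t \in \III^{-1}$ yields that the discrepancy multiplied by $\sigma^n(\III^{-1})$ lands in $\III$, and multiplying this inclusion by $\sigma^n(\III)$ upgrades "discrepancy $\in \III$" to "discrepancy $\in \III\,\sigma^n(\III)$", which is exactly solvability because the entries of $\geng_0$ and $\sigma^n(\gengt_0)$ generate $\III$ and $\sigma^n(\III)$ respectively. (Note also that the paper carries matrix corrections $A_n \in M_2(D)$ — a running change of the generating pair of $I$ — rather than scalars $q_n$, so that once the membership holds the correction exists verbatim; in your scalar formulation even that last step would need a further argument.) So the proposal identifies the right crux but leaves it unproved, and the mechanism it gestures at is not the one that closes it.
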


\begin{remark}
	In future, we will use the fact that
	$$ \III R = \Laurent{\III}{x}{\sigma} = \Big\{ \, \sum_{i = k}^\infty a_i x^i \,\,  \Big\vert \,\, a_i \in \III, \,\, k \in \Z \, \Big\}$$
	without any further reference. To check that this is true, note that the ideal $\III \lhd D$ is generated by two elements, say $g_1$ and $g_2$ (in fact, since $D$ is a commutative Dedekind domain, every ideal has the $1 \frac{1}{2}$-generator property: for every nonzero $a \in \III$ there is an element $b \in \III$ such that $a$ and $b$ generate $\III$). The left-to-right inclusion is obvious, so let us take an
	$$ a = \sum_{i = k}^\infty a_i x^i$$
	with $a_i \in \III$, and show that $a \in \III R$. Since $g_1$ and $g_2$ generate $\III$, we can write each $a_i$ as
	$$ a_i = g_1 a_{i1} + g_2 a_{i2}$$
	for some $a_{i1}, a_{i2} \in D$. Thus,
	$$ a = g_1 \left( \sum_{i = k}^\infty a_{i1} x^i \right) + g_2 \left(  \sum_{i = k}^\infty a_{i2} x^i \right)\text{,}$$
	which is now obviously an element of $\III R$.
\end{remark}

From now on let $I_R \leq R_R$ be a right ideal, and write $\III$ for $\const(I)$. Since $D$ is a Dedekind domain, every ideal in $D$ is generated by two elements; let $a_1,a_2 \in D$ generate $\III$. Set
$$\geng_0 = \begin{bmatrix} a_1 & a_2 \end{bmatrix} \quad \text{and} \quad \gengt_0 = \begin{bmatrix} a_2 \\ -a_1 \end{bmatrix}\text{.}$$
%If we assume that $\sigma$ has no finite orbits on ${\rm Spec \, } D$,
By Lemma \ref{gen lemma}, there are $f_1, f_2 \in I$ with $\lowest(f_i) = a_i$ that generate $I$. Without loss of generality we may assume that
$$ f_i = a_i + \sum_{j=1}^\infty a_{ij} x^j$$
for some $a_{ij} \in D$.
Define
$$ \geng_j = \begin{bmatrix} a_{1j} & a_{2j} \end{bmatrix}$$
for every $j \in \N$ and set
$$\geng  = \sum_{j=0}^\infty \geng_j x^j  = \begin{bmatrix} f_1 & f_2 \end{bmatrix}\text{.}$$

Since the entries of $\geng$ generate $I$, and the entries of $\geng_0$ generate $\III$, we have the following relation on the rows $\geng_i$.
	
\begin{lemma} \label{lemma s_i}
	For every $n \in \N_0$ and $$\sss_0, \sss_1, \dots, \sss_{n-1} \in \begin{bmatrix} D \\ D \end{bmatrix}$$ such that
	\begin{align} \label{predp0} \sum_{i=0}^k \geng_i \sigma^i ( \sss_{k-i} ) = 0 \end{align}
	for every $k = 0, 1, \ldots, n-1$, we have
	$$ \sum_{i=1}^n \geng_i \sigma^i ( \sss_{n-i} ) \in \III \text{.}$$
\end{lemma}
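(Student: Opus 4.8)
The plan is to reinterpret the hypothesis as a statement about the lowest coefficient of a single concrete element of $I$. The key observation is that, because the entries $f_1, f_2$ of $\geng$ generate $I$ as a right ideal, for every column vector $\sss \in \begin{bmatrix} R \\ R \end{bmatrix}$ the product $\geng \sss$ is a right $R$-linear combination of $f_1$ and $f_2$ and therefore lies in $I$. I would apply this to the \emph{finite} truncation determined by the given data, namely
$$ \sss = \sum_{k=0}^{n-1} \sss_k x^k \in \begin{bmatrix} R \\ R \end{bmatrix} \text{,}$$
so that effectively $\sss_k = 0$ for all $k \geq n$.

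Next I would compute the coefficients of $\geng \sss$ explicitly. Writing $\geng = \sum_{j \geq 0} \geng_j x^j$ and using $x^j a = \sigma^j(a) x^j$ entrywise, the coefficient of $x^m$ in $\geng \sss$ is $\sum_{i=0}^m \geng_i \sigma^i(\sss_{m-i})$. By the hypothesis \eqref{predp0} these coefficients vanish for $m = 0, 1, \ldots, n-1$, so $\geng \sss$ has no terms of degree below $n$. In degree $n$ the summand for $i = 0$ is $\geng_0 \sss_n = 0$ by our choice of truncation, so the coefficient of $x^n$ equals exactly $\sum_{i=1}^n \geng_i \sigma^i(\sss_{n-i})$.

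It then remains only to read off the conclusion. If this degree-$n$ coefficient is zero it lies in $\III$ trivially; otherwise it is precisely $\lowest(\geng \sss)$, and hence an element of $\const(I) = \III$ by definition, since $\geng \sss \in I$. In either case $\sum_{i=1}^n \geng_i \sigma^i(\sss_{n-i}) \in \III$, as required. The computation itself is routine; the only point that requires care --- and the real content of the argument --- is the choice of the finite truncation, which forces $\sss_n = 0$ and thereby lets the hypothesis annihilate exactly the degrees $0$ through $n-1$, promoting the degree-$n$ coefficient to the role of $\lowest(\geng \sss)$.
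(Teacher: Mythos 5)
Your proof is correct and is essentially the same argument as the paper's: both form the truncated column $\sss = \sum_{k=0}^{n-1} \sss_k x^k$, observe that $\geng \sss \in I$ because the entries of $\geng$ generate $I$, use the hypothesis \eqref{predp0} to kill all coefficients below degree $n$, and identify the degree-$n$ coefficient $\sum_{i=1}^n \geng_i \sigma^i(\sss_{n-i})$ as an element of $\const(I) = \III$. Your explicit treatment of the case where that coefficient vanishes (so that it is not literally $\lowest(\geng\sss)$) is a small point the paper leaves implicit, but the route is identical.
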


\begin{proof}
Set
$ \sss = \sum_{i=0}^{n-1} \sss_{i} x^i\text{.}$
Since the entries of $\geng$ generate $I$, we have
$$ \geng \sss \in I \text{.}$$
Computing this expression yields
\begin{align*}
	\geng \sss &= \left( \sum_{i=0}^\infty \geng_i x^i \right) \left( \sum_{i=0}^{n-1} \sss_i x^i \right) \\
	&= \sum_{k=0}^{n-1} \sum_{i=0}^k \geng_i \sigma^i ( \sss_{k-i} )  x^k + \sum_{k=n}^{\infty} \sum_{i=k-n+1}^k \geng_i \sigma^i ( \sss_{k-i} )  x^k \text{.}
\end{align*}
By assumption \eqref{predp0}, the first summand is zero, so we have that
$$ \geng \sss = \sum_{i=1}^n \geng_i \sigma^i ( \sss_{n-i} )  x^n +  h x^{n+1} \in I$$
for some $h = \sum_{i=0}^\infty h_i x^i$ with $h_i \in D$. By definition of the constant ideal, this means that
$$\sum_{i=1}^n \geng_i \sigma^i ( \sss_{n-i} )  \in \III \text{.}  \eqno \qedhere$$
\end{proof}

The following lemma gives us sufficient conditions on $\geng_i$ for the ideal $I$ to be isomorphic to $\III R$. 

\begin{lemma} \label{lemma A_i}
	If there exist matrices $A_i \in M_2(D)$, with $A_0$ being the identity matrix, such that
	\begin{align} \label{predp} \sum_{i=0}^k \geng_i  \sigma^i \left( A_{k-i} \right)\sigma^k(\gengt_0) = 0\end{align}
	holds for all $k \in \N_0$, then
	$ I \simeq \III R$.
\end{lemma}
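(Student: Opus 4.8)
The plan is to recognise hypothesis \eqref{predp} as a single factorisation identity over the division ring $Q$ and then to produce the isomorphism by left multiplication by one element of $Q$. First I would assemble the matrices into $A = \sum_{i=0}^\infty A_i x^i \in M_2(R)$. Since the constant coefficient $A_0$ is the identity matrix, $A$ is invertible in $M_2(R)$: writing $A = \mathrm{Id} + N$ with $N$ of order $\geq 1$ in $x$, the $x$-adic Neumann series $\sum_{m \geq 0}(-N)^m$ converges (each coefficient of $x^k$ receives contributions only from $m \leq k$) and inverts $A$. Multiplying out $\geng A$ exactly as in the proof of Lemma \ref{lemma s_i} gives $\geng A = \sum_{k=0}^\infty \ccc_k x^k$ with $\ccc_k = \sum_{i=0}^k \geng_i \sigma^i(A_{k-i})$, so that \eqref{predp} reads $\ccc_k \sigma^k(\gengt_0) = 0$ for all $k$. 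Since $x^k \gengt_0 = \sigma^k(\gengt_0) x^k$, this says precisely that $(\geng A)\gengt_0 = 0$ in $Q$.

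Next I would exploit this orthogonality coefficientwise. Assuming $\III \neq 0$ (the case $I = 0$ being trivial), the vector $\sigma^k(\gengt_0) \in K^2$ is nonzero, and its orthogonal complement in $K^2$ is the line $K \cdot \sigma^k(\geng_0)$, because $\geng_0 \gengt_0 = a_1 a_2 - a_2 a_1 = 0$ forces $\sigma^k(\geng_0)\sigma^k(\gengt_0) = 0$. Hence each row satisfies $\ccc_k = \lambda_k \sigma^k(\geng_0)$ for some $\lambda_k \in K$; comparing the $k=0$ term, $\ccc_0 = \geng_0 A_0 = \geng_0$, yields $\lambda_0 = 1$. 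Setting $u = \sum_{k=0}^\infty \lambda_k x^k \in Q$ (a power series, hence a genuine nonzero element of $Q$), and using $\sigma^k(\geng_0) x^k = x^k \geng_0$, I obtain the factorisation
\[ \geng A = \sum_{k=0}^\infty \lambda_k x^k \geng_0 = u\, \geng_0 \]
as row vectors over $Q$.

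Finally I would read off the isomorphism. Since $A$ is invertible, $A R^2 = R^2$, so $I = \geng R^2 = (\geng A) R^2 = u\, \geng_0 R^2$. The remark following Proposition \ref{extension lemma} identifies $\geng_0 R^2 = a_1 R + a_2 R = \III R$, whence $I = u\, \III R$ inside $Q$. Because $Q$ is a division ring and $u \neq 0$, left multiplication by $u$ is an injective additive map that commutes with the right $R$-action, hence a right $R$-module isomorphism $\III R \to u\, \III R = I$. This gives $I \simeq \III R$, as required.

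The conceptual heart of the argument — and the step most likely to hide a subtlety — is the passage from the family of orthogonality relations $\ccc_k \sigma^k(\gengt_0) = 0$ to the single scalar series $u$ with $\geng A = u\, \geng_0$; this is what converts the hypothesis into a left-multiplication isomorphism, and it is the reason the construction must take place in $Q$ rather than in $R$ (the scalars $\lambda_k$ lie in $K$, not in $D$). The remaining points to check are routine: that $u$ is a bona fide Laurent series (automatic, since $\geng A$ is a power series, so only nonnegative powers of $x$ occur), that $u\,\III R$ lands inside $R$ and equals $I$ (forced, since $u\,\geng_0 = \geng A$ has entries in $R$ and $I = (\geng A) R^2$), and the invertibility of $A$ via the $x$-adic Neumann series.
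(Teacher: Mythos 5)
Your proof is correct and follows essentially the same route as the paper: both form $A = \sum_i A_i x^i \in M_2(R)$, use its invertibility (the paper cites \cite[Proposition 4.4]{Tu} where you run the $x$-adic Neumann series directly), and produce a single scalar $u \in Q$ with $\geng A = u\,\geng_0$, the solvability of the coefficientwise equations $\ccc_k = \lambda_k \sigma^k(\geng_0)$ being exactly the orthogonality condition \eqref{predp} since the annihilator of $\sigma^k(\gengt_0)$ in $K^2$ is the line spanned by $\sigma^k(\geng_0)$. Your write-up merely makes explicit two steps the paper leaves implicit (the inversion of $A$ and the identification $\geng_0 R^2 = \III R$), so there is nothing to correct.
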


\begin{proof}
Recall that we write $K$ for the field of fractions of $D$ and $Q$ for $\Laurent{K}{x}{\sigma}$. Set
$$ A =  \sum_{i=0}^\infty A_i x^i \in M_2(R)$$
and let
$$q = 1 + \sum_{i=1}^\infty q_i x^i \in Q \text{,}$$
where $q_i \in K$ have yet to be determined.
We will define $q_i$ so that we have
\begin{align} \label{q} q \geng_0 = \geng A \text{.}\end{align}
Since the matrix $A$ is invertible \cite[Proposition 4.4]{Tu}, the two entries of $\geng A$ generate $I$. Therefore \eqref{q} implies that $\III R$ is isomorphic to $I$ (via multiplication by $q$). The equation \eqref{q} is equivalent to the system
\begin{align*} q_k \sigma^k (\geng_0) = \sum_{i=0}^k \geng_i \sigma^i (A_{k-i}) \end{align*}
for $k \in \N$. Since these are $2$-dimensional rows and $\sigma^k (\geng_0)$ is nonzero, the existence of such $q_i \in K$ is equivalent to the condition \eqref{predp}.
\end{proof}

Now all that is left is to construct the matrices $A_i$ from the previous lemma.

\begin{proof}[Proof of Proposition \ref{extension lemma}]
We assume that $\III \neq 0$, since otherwise $I = 0$ as well and the statement is clear. We will prove the existence of matrices $A_i$ from Lemma \ref{lemma A_i} by induction. The equality \eqref{predp} for $k=0$ is trivial (since $A_0$ is the identity matrix), so assume that $n \geq 1$ and that there exist matrices $A_1$,~\ldots,~$A_{n-1}$ such that \eqref{predp} holds for every $k=0,1,\ldots, n-1$. Since $D$ is a Dedekind domain and $\III$ is a nonzero ideal of $D$, there is a fractional ideal $\III^{-1} \subseteq K$ (i.e., a finitely generated $D$-module) such that
\begin{align} \label{inverse}
    \III \III^{-1} = D \text{.}
\end{align}
We will first show that
\begin{align}\label{ideal cont}
\sum_{i=1}^n \geng_i \sigma^i (A_{n-i}) \sigma^n ( \gengt_0) \sigma^n (\III^{-1}) \subseteq \III \text{.}
\end{align}
To prove this take a $q \in \III^{-1}$ and let
$$\sss_i = A_i \sigma^i (\gengt_0) \sigma^i (q) \in \begin{bmatrix} D \\ D \end{bmatrix}$$
for $i=0,1,\ldots,n-1$. Then \eqref{predp} implies \eqref{predp0}, and by Lemma \ref{lemma s_i}, we have
$$ \sum_{i=1}^n \geng_i \sigma^i ( \sss_{n-i} ) =  \sum_{i=1}^n \geng_i \sigma^i (A_{n-i}) \sigma^n ( \gengt_0) \sigma^n (q) \in \III \text{.}$$
Since this holds for every $q \in \III^{-1}$, inclusion \eqref{ideal cont} follows.

By multiplying inclusion \eqref{ideal cont} with the ideal $\sigma^n(\III)$, we get
$$\sum_{i=1}^n \geng_i \sigma^i (A_{n-i}) \sigma^n ( \gengt_0) \sigma^n (\III^{-1} \III) \subseteq \III \sigma^n (\III) \text{.}$$
By \eqref{inverse}, we have $\sigma^n(\III^{-1} \III) = D$, which implies
$$ \sum_{i=1}^n \geng_i \sigma^i (A_{n-i}) \sigma^n ( \gengt_0) \in \III \, \sigma^n ( \III ) \text{.}$$
Since the entries of $\geng_0$ and $\sigma^n (\gengt_0)$ generate $\III$ and $\sigma^n (\III)$, respectively, this means that there exists a matrix $A_n \in M_2(D)$ such that
$$ -\geng_0 A_n \sigma^n ( \gengt_0)  = \sum_{i=1}^n \geng_i \sigma^i (A_{n-i}) \sigma^n ( \gengt_0 ) \text{.}$$
This concludes the induction step. Applying Lemma \ref{lemma A_i} concludes the proof.
\end{proof}

\section{Laurent series ring is a Dedekind domain}

We are now in position to establish one of the main results. 

\begin{theorem} \label{dedekind}
	Let $D$ be a commutative Dedekind domain with an automorphism $\sigma$. Then $R = \Laurent{D}{x}{\sigma}$ is a noncommutative Dedekind domain. Furthermore, the ring $R$ is simple if and only if $\sigma(\III) \neq \III$ for every nonzero proper ideal $\III \lhd D$.
\end{theorem}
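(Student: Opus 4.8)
The plan is to prove Theorem \ref{dedekind} in two parts. First I would establish that $R$ is a noncommutative Dedekind domain, which requires checking four properties: that $R$ is a domain, that $R$ is noetherian, that $R$ is hereditary, and that $R$ is an Asano order. The domain property is already known (cited in the introduction via \cite{Tu}), and the noetherian property follows from $D$ being noetherian (also cited). For the hereditary property, the key input is Proposition \ref{extension lemma}: since every right ideal $I_R$ is isomorphic to an extended ideal $\III R$, and $\III R \cong \Laurent{\III}{x}{\sigma}$, I would show such modules are projective. Because $\III$ is a finitely generated projective (indeed invertible) $D$-module, the extension $\III R = \III \otimes_D R$ should inherit projectivity over $R$; more concretely, since $\III$ is a direct summand of $D^n$, the module $\III R$ is a direct summand of $R^n$, hence projective. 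By the remark about left-right symmetry, the same argument applies to left ideals. By \cite[Corollary 7.65]{Lam2}, noetherian plus right hereditary already gives left hereditary, so it suffices to verify one side carefully.

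The remaining piece of the Dedekind property is the Asano order condition: every nonzero two-sided ideal must be both left and right invertible. For this I would analyze two-sided ideals directly. Given a nonzero two-sided ideal $J \lhd R$, I expect its constant ideal $\const(J)$ to be a $\sigma$-invariant ideal of $D$ (the two-sidedness forces $\sigma$-invariance, since $xJ = Jx$ interacts with the twisting rule $xa = \sigma(a)x$). I would then identify $J$ with an extended ideal of the form $\III R$ where $\sigma(\III) = \III$, and construct the inverse using the fractional ideal $\III^{-1}$ from $D$, checking that $\III^{-1} R$ serves as a two-sided inverse. The compatibility of $\sigma$ with $\III$ is what makes $\III^{-1} R$ itself two-sided and makes the products $\III R \cdot \III^{-1} R = R$ work out on both sides.

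For the simplicity characterization, I would show that $R$ is simple if and only if the only $\sigma$-invariant ideals of $D$ are $0$ and $D$. The forward direction: if there were a nonzero proper $\sigma$-invariant ideal $\III$ with $\sigma(\III) = \III$, then $\III R$ would be a nonzero proper two-sided ideal of $R$, contradicting simplicity; one must verify that $\III R \neq R$, which holds because $\III R$ consists of series with all coefficients in $\III$ and cannot contain $1$. The reverse direction: given any nonzero two-sided ideal $J$, its constant ideal $\const(J)$ is a nonzero $\sigma$-invariant ideal of $D$, hence equals $D$ by hypothesis; then $1 \in \const(J)$ and I would use Lemma \ref{gen lemma} together with two-sidedness to conclude $J = R$. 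The contrapositive formulation in the statement, namely that $\sigma(\III) \neq \III$ for \emph{every} nonzero proper ideal, is just the assertion that no nonzero proper ideal is $\sigma$-invariant, so the two formulations match.

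The main obstacle I anticipate is the precise handling of two-sided ideals and their constant ideals: establishing that $\const(J)$ is genuinely $\sigma$-invariant and that $J$ is recoverable as $\const(J) R$ requires care, because two-sidedness imposes a relation between left and right multiplication that must be reconciled with the skewing automorphism. In particular, showing that a $\sigma$-invariant $\III$ gives a two-sided $\III R$ and that $\III^{-1} R$ is a genuine two-sided inverse (rather than merely a one-sided one) is where the interaction between $\sigma$-invariance of $\III$ and the twisting rule $xa = \sigma(a)x$ is essential. Once this structural correspondence between $\sigma$-invariant ideals of $D$ and two-sided ideals of $R$ is pinned down, both the Asano condition and the simplicity criterion follow, with the simplicity statement being essentially the special case where the lattice of $\sigma$-invariant ideals is trivial.
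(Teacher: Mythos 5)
Your overall architecture (domain, noetherian, hereditary, Asano, then the simplicity criterion via constant ideals) matches the paper, and two of the four pieces are sound: the simplicity characterization is argued exactly as in the paper (a two-sided ideal $I$ satisfies $xI = I$, forcing $\sigma(\const(I)) = \const(I)$; if $\const(I) = D$ then $I$ contains an element with lowest coefficient $1$, which is a unit of $R$, so $I = R$; conversely a $\sigma$-invariant $\III$ yields the proper two-sided ideal $\III R$). The hereditary step is also essentially the paper's, though you gloss one point: you write $\III R = \III \otimes_D R$ as if automatic, but this identification (injectivity of $\III \otimes_D R \to R$) is exactly what the paper secures by proving $R$ is flat as a left $D$-module (writing $R$ as a direct limit of the $R_k \simeq \prod_{i \geq k} D$, flat since $D$ is noetherian). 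For an invertible $\III$ this can alternatively be fixed by a direct dual-basis argument using $a_j \in \III$, $b_j \in \III^{-1}$ with $\sum_j b_j a_j = 1$, so this is a repairable omission rather than a flaw.

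The genuine gap is in the Asano step. Your plan rests on identifying a two-sided ideal $J \lhd R$ with the extended ideal $\const(J)R$, and then inverting it by $\const(J)^{-1}R$. That identification is false, and the paper itself flags the counterexample in the introduction: take $D = \Z$, $\sigma = \mathrm{id}$, and $J = (2+x)R \lhd R = \Laurent{\Z}{x}{\mathrm{id}}$, which is two-sided since $R$ is commutative. Here $\const(J) = 2\Z$ is indeed $\sigma$-invariant, but $2 \notin (2+x)R$ because $(2+x)^{-1} \cdot 2 = \sum_{i \geq 0} (-x/2)^i \notin R$; hence $J \neq \const(J)R$, and moreover $\const(J)^{-1}R \cdot J = \tfrac{2+x}{2}R \neq R$, so your proposed inverse fails outright. (You correctly anticipated this recoverability issue as the ``main obstacle,'' but it is not a technicality to be reconciled with the twisting rule --- it is unprovable as stated, already when $\sigma = \mathrm{id}$.) What Proposition \ref{extension lemma} actually delivers is only an isomorphism $J \simeq \III R$, and --- crucially, as the proof of Lemma \ref{lemma A_i} shows --- this isomorphism is realized by left multiplication by some $q \in Q$, so $J = q\,\III R$. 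The paper inverts this twisted form: setting $L = R\III^{-1}q^{-1}R$ and using only two-sidedness in the form $RJ = J$, one gets $LJ = R$; the symmetric left-handed version of Proposition \ref{extension lemma} gives $J = R\,\mathfrak{J}\,p$ and a right inverse $L' = Rp^{-1}\mathfrak{J}^{-1}R$ with $JL' = R$; multiplying $LJ = R$ on the right by $L'$ forces $L = L'$, producing a genuine two-sided inverse. Note also that, contrary to your sketch, no $\sigma$-invariance of $\III$ enters the Asano argument at all --- it is needed only in the simplicity statement.
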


\begin{proof}
	The fact that $R$ is a domain is a standard result.
    % (see [ref])
	%
	The ring $R$ is noetherian due to Proposition \ref{extension lemma}.
	To see that $R$ is hereditary, first we prove that it is flat as a left $D$-module. Indeed, direct products of flat modules are flat over a noetherian ring $D$, and $R$ can be written as a union of
	$$ R_k = \Big\{ \, \sum_{i = k}^\infty a_i x^i \,\,  \Big\vert \,\, a_i \in D \, \Big\} \simeq \prod_{i=k}^{\infty} D$$
	as left $D$-modules. In fact, $R$ is a direct limit of $R_k$ with the natural embeddings and is thus itself flat.
    Now take any right ideal $I_R \leq R_R$ and let $\III = \const(I)$.
	Proposition \ref{extension lemma} shows that $I \simeq \III R$. Since $R$ is a flat $D$-module, we have that $\III R \simeq \III \otimes_D R$. The projective property is preserved by tensoring with flat modules, hence $I$ is projective. This shows that $R$ is right hereditary; the left hereditary property follows by symmetry (or, alternatively, it follows from  \cite[Corollary 7.65]{Lam2}).
	
	Now, we will prove that $R$ is an Asano order. Take a nonzero two-sided ideal $I \lhd R$. Since $I$ is (in particular) a right ideal, Proposition \ref{extension lemma} implies that there is a $q \in Q$ and an ideal $\III \lhd D$ such that
	$ I = q \III R$.
	Consider the left fractional ideal
	$J = R \III^{-1} q^{-1} R \text{.}$
	Since $R I = I$, we see that
	\begin{align} \label{idealeq1}
		J I = R \text{.}
	\end{align}
	Similarly, since $I$ is also a left ideal, the left symmetric version of Proposition \ref{extension lemma} implies that there is a $p \in Q$ and an ideal $\JJJ \lhd D$ such that $I = R \JJJ p$. For a right fractional ideal $J' = R p^{-1} \JJJ^{-1} R$, we have
	\begin{align} \label{idealeq2}
		I J' = R \text{.}
	\end{align}
	Multiplying equation \eqref{idealeq1} from the right by $J'$ and considering \eqref{idealeq2}, yields $J = J'$. Therefore $J$ is a two sided fractional ideal with $IJ = JI=R$.
	
	Finally, if $I \lhd R$ is a two-sided ideal of $R$, then $x I = I$, which implies that $\sigma(\III) = \III$ for the ideal $\III = \const(I) \lhd D$. So if there are no such nonzero proper ideals, then either $I = 0$ or $I = R$. Conversely, if 
$\III \lhd D$ is a nonzero proper ideal with $\sigma(\III) = \III$, then $I = \III R$ is a nonzero proper two-sided ideal of $R$.
\end{proof}

For the ring $R$, we will now determine its Krull dimension $\K(R)$, right global dimension $\gld(R)$, and stable rank $\sr(R)$. For definitions  see \cite[Sections 6.2.2, 7.1.8, 11.3.4]{McR}.

\begin{theorem}
    Let $D$ and $R$ be as in the previous theorem, and assume that $D$ is not a field. Then $\K(R) = 1$, $\gld(R) = 1$, and $\sr(R) = 2$.
\end{theorem}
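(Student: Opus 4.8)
The plan is to treat the three invariants in turn, using throughout that $R$ is a noncommutative Dedekind domain (Theorem \ref{dedekind}) and that $u \in R$ is a unit precisely when $\lowest(u) \in U(D)$.

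\emph{Global dimension.} Since $R$ is right hereditary, $\gld(R) \le 1$, and equality holds as soon as $R$ is not semisimple. As $D$ is not a field it contains a nonzero non-unit $\pi$, whose inverse in $Q = \Laurent{K}{x}{\sigma}$ is $\pi^{-1} \in K \setminus D$; hence $\pi$ is not invertible in $R$, so $R$ is a domain that is not a division ring and is therefore not semisimple. Thus $\gld(R) = 1$.

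\emph{Krull dimension.} First, $R$ is not right artinian, since a right artinian domain is a division ring; hence $\K(R) \ge 1$. For the reverse inequality I would show that $R/I$ has Krull dimension $0$ for every nonzero right ideal $I$. Being a noetherian domain, $R$ is right Ore, so every nonzero right ideal is essential and $R/I$ is a finitely generated torsion module; over a noncommutative Dedekind domain such a module has finite length by the torsion theory already invoked (cf. \cite[\S5.7]{McR}), so it has Krull dimension $0$. In any descending chain of right ideals all but at most one factor embeds into some $R/I$ with $I \neq 0$, hence has Krull dimension $\le 0$; this gives $\K(R) \le 1$, so $\K(R) = 1$.

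\emph{Stable rank.} The upper bound $\sr(R) \le \K(R) + 1 = 2$ is Stafford's stable range theorem for right noetherian rings \cite[\S11.3]{McR}. The content is the lower bound $\sr(R) \ge 2$, i.e. exhibiting a length-two unimodular row over $R$ that cannot be shortened. I would take
$$ a = \pi + x, \qquad b = \pi^2 + x, $$
where $\pi$ is a nonzero element of a maximal ideal $\mathfrak{m} \lhd D$. For any $t \in R$, comparing $x$-orders shows $\lowest(a + bt) \in \mathfrak{m}$: since $\lowest(b) = \pi^2$ and $b$ has order $0$, the lowest term of $bt$ has order equal to that of $t$, and when this order is $> 0$ the lowest coefficient of $a + bt$ is $\pi$; when it is $< 0$ it is $\pi^2 \lowest(t) \in \mathfrak{m}$; and when it is $0$ it equals $\pi(1 + \pi\lowest(t))$, a nonzero element of $\mathfrak{m}$, so no cancellation can occur. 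Hence $a + bt$ is never a unit and the row does not reduce to a unimodular row of length one.

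It remains to verify that $(a,b)$ is unimodular, i.e. $aR + bR = R$, and this is the step I expect to be the main obstacle. Equivalently one must show $\const(aR + bR) = D$. Right-multiplying gives $a\cdot(-\pi) + b = (1 - \sigma(\pi))\,x \in aR + bR$, so $\const(aR + bR)$ contains both $\pi$ and $1 - \sigma(\pi)$; when $\sigma = \mathrm{id}$ these are coprime and unimodularity is immediate. For general $\sigma$ the twist defeats this one-line argument, and one must extract further lowest coefficients of the shape $1 - \sigma^{k}(\pi)$ at higher orders from suitable right combinations of $a$ and $b$ and argue that, together with $\pi$, they generate $D$; carrying this out uniformly in $\sigma$ is the delicate point. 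Once unimodularity is in hand, combining it with the non-reducibility above yields $\sr(R) \ge 2$, and hence $\sr(R) = 2$.
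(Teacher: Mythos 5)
Your treatments of the Krull and global dimension are essentially correct. For $\gld(R)$ you argue exactly as the paper does (hereditary, plus $R$ is a domain that is not a division ring, hence not semisimple). For $\K(R)\le 1$ you take a slightly different route: finite length of finitely generated torsion modules over a noncommutative Dedekind domain, then the critical-chain criterion. That fact is true, but be careful with the citation: it is usually \emph{derived} from the statement you are proving (hereditary noetherian prime $\Rightarrow \K \le 1$), which is \cite[Corollary 6.2.8]{McR}; the paper cites that corollary directly, and you should too, to avoid a circular reference. The lower bound $\K(R)\ge 1$ and the upper bound $\sr(R)\le \K(R)+1 = 2$ match the paper.

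The genuine gap is the one you flag yourself: unimodularity of your row $\begin{bmatrix} \pi+x & \pi^2+x \end{bmatrix}$ for general $\sigma$. Your combination puts $\pi$ and $1-\sigma(\pi)$ into $\const(aR+bR)$, and these need not be coprime: for $D=k[t]$, $\sigma(t)=t+1$, $\pi=t$, they generate only $(t)$. (In that particular case the row happens to still be unimodular, because $x$ is a unit of $R$: from $(1-\sigma(\pi))x \in I$ one gets $-t \in I$, hence $x = a - t \in I$ and $1 = x x^{-1} \in I$; but this rescue exploits the accident $1-\sigma(\pi)=-\pi$ and does not generalize, so the step remains open as you wrote it.) The paper sidesteps the difficulty entirely by building the twist into the row: it takes $\aaa = \begin{bmatrix} a+x & a^2+bx \end{bmatrix}$ with $b = 1+\sigma(a)$, so that
\begin{equation*}
(a+x)a - (a^2+bx) = (\sigma(a)-b)\,x = -x
\end{equation*}
is a unit of $R$ and unimodularity is immediate for \emph{every} $\sigma$, with no coprimality analysis needed. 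Your non-reducibility argument — the case analysis on the order $k$ of $r$ showing that $\lowest\bigl((a+x)+(a^2+bx)r\bigr)$ equals $a^2 r_k$, $a(1+ar_0)$, or $a$, always a nonzero non-unit — is exactly the paper's computation and carries over verbatim to the modified row. So the fix is simply to replace the coefficient of $x$ in your second entry by $1+\sigma(\pi)$; as written, the stable-rank lower bound is incomplete at precisely the point you identified as the main obstacle.
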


\begin{proof}
    By Theorem \ref{dedekind}, $R$ is hereditary, noetherian, and prime, and thus by \cite[Corollary 6.2.8]{McR}, we have $\K(R) \leq 1$. If $\K(R) = 0$, then $R$ would be (right) artinian, hence $D$ would be an artinian Dedekind domain, and thus a field. Therefore $\K(R) = 1$.

    By Theorem \ref{dedekind}, $R$ is (right) hereditary, and thus $\gld(R) \leq 1$. Since $R$ is not (right) artinian, as before, we have $\gld(R) = 1$.

    By \cite[Corollary 6.7.4]{McR}, we have that $\sr(R) \leq \K(R) +1 = 2$. We will prove that $\sr(R) \geq 2$ by finding a two dimensional unimodular row $\aaa$ that is not stable, i.e.,
    $$ \aaa = \begin{bmatrix} a_1 & a_2\end{bmatrix} \in \begin{bmatrix} R & R \end{bmatrix}$$
    such that
    $$ a_1 R + a_2 R = R \text{,}$$
    but $a_1 + a_2 r$ is not invertible for any $r \in R$.
    To find such an $\aaa$, let $a \in D$ be a nonzero element that is not invertible. For $b = 1 + \sigma(a)$, the row $$ \aaa = \begin{bmatrix} a + x & a^2 + bx\end{bmatrix}$$
    has the wanted properties.
    Indeed, $\aaa$ is unimodular, since
    $$ (a+x)a - (a^2+bx) = (\sigma(a) - b) x = - x$$
    is an invertible element of $R$.
    We will prove that $\aaa$ is not stable. For contradiction, assume that there is an $r \in R$ such that
    \begin{align*}%\label{sr eq}
    	f = (a+x) + (a^2+bx) r \in R
    \end{align*}
    is invertible. Clearly $r \neq 0$, so write $r = \sum_{i=k}^\infty r_i x^i$ for some $r_i \in D$ and $k \in \Z$ with $r_k \neq 0$. The lowest coefficient
    $$ \lowest(f) = \begin{cases} a^2 r_k & k < 0 \\ a + a^2 r_0 & k = 0 \\ a & k > 0 \end{cases}$$
    cannot be invertible or zero (since neither is $a$), but this contradicts the invertibility of $f$.
\end{proof}

We have yet to compute the general linear rank of $R$, defined in \cite[Section 11.1.14]{McR}. In the following section we will show that $\glr(R) = 1$.

\section{Computing the ideal class group}

Since, by Theorem \ref{dedekind}, $R$ is a Dedekind domain, we have that $K_0 (R) \cong G(R) \oplus \Z$, where $G(R)$ denotes the (right) ideal class group of $R$. Therefore, determining $K_0(R)$ is the same as determining $G(R)$. We can verify that $G(R)$ is isomorphic to the group of stable isomorphism classes $[I]$ of (right) ideals $I_R \leq R_R$, where the addition is given by
$$ [I] + [J] = [K] \quad \text{if and only if} \quad [I \oplus J] = [K \oplus R] \text{.}$$
%Note that it is nontrivial to see the existence of the inverse elements, this is due to \cite[Theorem 11.7.13(ii)]{McR}.

Proposition \ref{extension lemma} shows that the natural map $\phi \colon G(D) \rightarrow G(R)$, mapping $[ \III ]$ to $[\III R]$, is surjective.
%Here, we write $\langle I \rangle$ for an isomorphism class of $I$ and $[I]$ for a stable isomorphism class of $I$. The latter is larger in general rings, but in a commutative Dedekind domain these two classes coincide.
%
The kernel of $\phi$ obviously contains all the elements of the form
\begin{align}\label{ker} [ \sigma(\III) ] - [ \III ] \text{,}\end{align}
since $\sigma(\III)R \simeq \III R$ via multiplication by $x^{-1}$.
Note that in a commutative Dedekind domain $D$ stable isomorphism classes are exactly the same as isomorphism classes, while in a noncommutative Dedekind domain $R$ the former can be strictly larger than the latter.

Throughout this section assume that $\sigma$ acts trivially on the ideal class group of $D$, i.e., $\sigma(\III)$ is isomorphic to $\III$ for any ideal $\III \lhd D$. This is equivalent to $\JJJ^{-1} \sigma(\JJJ)$ being a principal fractional ideal for any nonzero fractional ideal $\JJJ$.
In this case, the elements in \eqref{ker} vanish. The following theorem, that we will prove at the end of this section, states that the kernel of $\phi$ is then trivial.

\begin{theorem} \label{G theorem}
	Let $D$ be a commutative Dedekind domain with an automorphism $\sigma$ that acts trivially on $G(D)$, and let $R = \Laurent{D}{x}{\sigma}$. Then $\phi \colon G(D) \rightarrow G(R)$, sending $[\III]$ to $[\III R]$, is an isomorphism. In that case,
	$$ K_0(R) \cong K_0(D) \text{.}$$
\end{theorem}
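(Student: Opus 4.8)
The plan is to reduce the theorem to the injectivity of $\phi$ and then prove injectivity by analysing isomorphisms of extended ideals. Since $\phi$ is already surjective (by Proposition \ref{extension lemma}) and is a group homomorphism, and since $K_0(R) \cong G(R) \oplus \Z$ and $K_0(D) \cong G(D) \oplus \Z$, it suffices to show $\ker \phi = 0$; the isomorphism $K_0(R) \cong K_0(D)$ then follows formally. Unwinding the definitions, an element $[\III] \in \ker \phi$ is one for which $\III R$ is stably isomorphic to $R$, i.e.\ $\III R \oplus R^n \simeq R^{n+1}$ for some $n$; by the cancellation property \eqref{canc property} this already forces $\III R \oplus R \simeq R^2$. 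So the theorem amounts to showing that such an $\III$ is principal.

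The first ingredient I would establish is a dictionary between $R$-module isomorphisms of extended ideals and isomorphisms of ideals in $D$. Because $R$ is a noetherian domain it is an Ore domain, so any isomorphism of right ideals $\III R \xrightarrow{\sim} \JJJ R$ is given by left multiplication by a single element $q \in Q$, with $q\,\III R = \JJJ R$ inside $Q$. Writing $q = q_m x^m + \cdots$ with $q_m \in K$ nonzero, the identity $\lowest(qf) = q_m \sigma^m(\lowest(f))$ shows that the constant ideals satisfy $\JJJ = \const(\JJJ R) = q_m \sigma^m(\const(\III R)) = q_m \sigma^m(\III)$. Since multiplication by $q_m$ is a $D$-module isomorphism and $\sigma$ acts trivially on $G(D)$ (so that $\sigma^m(\III) \simeq \III$), this yields $\III \simeq \JJJ$ as $D$-modules; in particular an honest isomorphism $\III R \simeq R$ already forces $\III$ to be principal.

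The crux is therefore the rank-one cancellation step: upgrading the stable isomorphism $\III R \oplus R \simeq R^2$ to an honest isomorphism $\III R \simeq R$. This is precisely where a general noncommutative Dedekind domain may fail (the Weyl algebra $A_1(k)$ being the standard counterexample), so the argument must use the special structure of $R$. The tool I would use is the lowest-degree valuation on $Q$ together with the lowest-coefficient map into $K$: realising the isomorphism as a matrix $T \in \mathrm{GL}_2(Q)$ that carries the lattice $\III R \oplus R$ onto $R^2$, one reads off from the lowest coefficients of the entries of $T$ and $T^{-1}$ a relation among generators of $\III$ (in the explicit style of Proposition \ref{extension lemma}) that forces the constant ideal $\III$ to be principal, the $\sigma$-triviality hypothesis being exactly what absorbs the twists $\sigma^m$ that appear. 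This is the same phenomenon recorded as $\glr(R)=1$, which supplies cancellation for length-two unimodular data; granting it, the stable isomorphism collapses to $\III R \simeq R$, whence $\III$ is principal and $\ker\phi = 0$. Then $\phi$ is an isomorphism $G(D) \cong G(R)$, and consequently $K_0(R) \cong G(R) \oplus \Z \cong G(D) \oplus \Z \cong K_0(D)$. I expect this rank-one cancellation to be the main obstacle, since everything else is either formal or a direct constant-ideal computation.
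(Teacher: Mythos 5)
Your formal frame is sound and matches the paper: surjectivity of $\phi$ from Proposition \ref{extension lemma}, the reduction of injectivity via the cancellation property \eqref{canc property} to a rank-one statement, and the closing dictionary (an isomorphism $\III R \simeq \JJJ R$ is left multiplication by some $q = \sum_{i \geq m} q_i x^i \in Q$, whence $\JJJ = q_m \sigma^m(\III)$ and $\sigma$-triviality gives $\III \simeq \JJJ$) is exactly how the paper finishes. But the crux you correctly isolate --- upgrading $\III R \oplus R \simeq R \oplus \JJJ R$ to $\III R \simeq \JJJ R$ --- is precisely what you have not proved, and the two supports you offer do not carry it. The appeal to $\glr(R)=1$ is circular: in the paper, $\glr(R)=1$ is a \emph{corollary} of Theorem \ref{G theorem} (via the corollary that stably isomorphic finitely generated projectives are isomorphic, together with \cite[Proposition 11.1.12]{McR}), so it is not available as an input. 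And since the statement fails for general noncommutative Dedekind domains (as you note with $A_1(k)$), no citation of generalities can substitute for an argument specific to $R$.

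The paper's actual mechanism is substantially more than ``reading off lowest coefficients of $T$ and $T^{-1}$.'' It recasts the cancellation as the assertion that every unimodular row $\aaa \in \begin{bmatrix} R & I^{-1} \end{bmatrix}$ is invertible (Lemma \ref{stab imp iso}), and proves this by an induction that is invisible at lowest order: one introduces $n$-unimodularity and $n$-invertibility \emph{relative to an auxiliary ideal} $\AAA \lhd D$ (Definitions \ref{def1} and \ref{def}), and a descent $\aaa \mapsto \aaaw$ that lowers $n$ by one while changing the auxiliary ideal from $\AAA$ to $\BBB = s\AAA + q\,\III\AAA^{-1}$, where $\aaa_0 = \begin{bmatrix} s & q \end{bmatrix}$ (Lemmas \ref{lemma uni}, \ref{lemma inv}, \ref{induction}). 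A single comparison of lowest coefficients sees only the $n=0$ layer; the obstruction lives in all higher coefficients, exactly as in the proof of Proposition \ref{extension lemma}, where the matrices $A_i$ had to be constructed inductively rather than read off at once. Your instinct about where $\sigma$-triviality enters is right in spirit but your sketch supplies no mechanism: in the paper the hypothesis is invoked at \emph{every} inductive step, to choose $\lambda$ with $\lambda D = \III \BBB^{-1} \sigma(\III^{-1} \BBB)$ and a generator $\mu$ of $\III\sigma(\III^{-1})$, and without letting the auxiliary ideal vary the induction does not close (this is the point of the otherwise unmotivated $\AAA$ in Definition \ref{def1}). So the proposal is a correct outline of the formal reductions plus a correct diagnosis of the hard step, but the hard step itself --- the entire inductive machinery --- is missing.
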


Let us also point out a corollary to this theorem, which shows that in $R$ stable isomorphism yields an isomorphism.

\begin{corollary}
	Let $D$ and $R$ be as in Theorem \ref{G theorem}. Then any two stably isomorphic finitely generated projective $R$-modules are isomorphic.
\end{corollary}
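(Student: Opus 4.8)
The plan is to deduce the corollary almost immediately from the combination of Theorem \ref{G theorem} and the cancellation property \eqref{canc property} that holds in any noncommutative Dedekind domain. Let $M$ and $N$ be finitely generated projective $R$-modules that are stably isomorphic, so that $M \oplus R^n \simeq N \oplus R^n$ for some $n \in \N_0$. By the structure result \eqref{ideal property}, I can write $M \simeq I \oplus R^{m}$ and $N \simeq J \oplus R^{m'}$ for right ideals $I, J$ and nonnegative integers $m, m'$. My first step would be to observe that comparing the images under the rank map $\psi \colon K_0(R) \to \Z$ forces $m = m'$ (so $M$ and $N$ have the same rank), since the stable class $[M] = [N]$ gives equal ranks.

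Next I would pass to the level of the ideal class group. Since $[M] = [N]$ in $K_0(R) \cong G(R) \oplus \Z$ and the ranks agree, the classes $[I]$ and $[J]$ coincide in $G(R)$; that is, $I$ and $J$ are stably isomorphic as right ideals. Here is where Theorem \ref{G theorem} enters: because $\phi \colon G(D) \to G(R)$ is an isomorphism and every element of $G(R)$ is represented by an extended ideal $\III R$ with $\III \lhd D$, I can reduce the comparison of $[I]$ and $[J]$ in $G(R)$ to a comparison of preimages in $G(D)$. In a commutative Dedekind domain $D$, stable isomorphism of ideals coincides with genuine isomorphism (as noted in the paragraph preceding the theorem), so the equality of classes in $G(D)$ yields $\III \simeq \JJJ$ as $D$-modules, whence $\III R \simeq \JJJ R$ and therefore $I \simeq J$ as right $R$-modules.

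Finally, with $I \simeq J$ and $m = m'$ in hand, I would invoke the cancellation property directly. From the stable isomorphism $M \oplus R^n \simeq N \oplus R^n$, repeatedly applying \eqref{canc property} reduces the number of free summands until we reach $M \oplus R \simeq N \oplus R$, and one more application cancels the final free summand to give $M \simeq N$. (Strictly speaking, once $I \simeq J$ is established the conclusion $M \simeq N$ is already immediate from $M \simeq I \oplus R^m$ and $N \simeq J \oplus R^m$; the cancellation route is the alternative that avoids explicitly decomposing $M$ and $N$.) I expect the only genuine subtlety to be the identification of stable isomorphism classes of right $R$-ideals with the ideal class group $G(R)$ and the transfer through $\phi$ — that is, checking that the isomorphism of Theorem \ref{G theorem} is compatible with the passage from abstract $K_0$-classes back to honest module isomorphisms. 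The cancellation step itself is routine once that identification is secured.
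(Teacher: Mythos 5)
Your main line of argument is correct and is essentially the paper's own proof: decompose $M \simeq I \oplus R^m$ and $N \simeq J \oplus R^{m'}$ via \eqref{ideal property}, force $m = m'$ (the paper compares uniform dimensions where you use the rank map $\psi$ --- a cosmetic difference), represent $I \simeq \III R$ and $J \simeq \JJJ R$ by Proposition \ref{extension lemma}, and then use the injectivity of $\phi$ from Theorem \ref{G theorem} together with the fact that in the \emph{commutative} Dedekind domain $D$ stable isomorphism of ideals coincides with isomorphism, to conclude $\III \simeq \JJJ$ and hence $M \simeq N$ directly from the decompositions. (Two small housekeeping points the paper attends to and you skip: dispose of the case $M = 0$ or $N = 0$ first, so that the ideals $I, J$ in \eqref{ideal property} can be taken nonzero, which your use of $\psi$ implicitly requires.)

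However, your closing ``cancellation route'' is wrong and should be deleted. The property \eqref{canc property} only reduces $M \oplus R^n \simeq N \oplus R^n$ down to $M \oplus R \simeq N \oplus R$; it does \emph{not} permit cancelling the final free summand, and claiming ``one more application'' does so misreads it. Cancelling that last copy of $R$ is precisely the nontrivial content of the whole section --- it is what Lemma \ref{stab imp iso} (via the unimodular-row machinery) and Theorem \ref{G theorem} establish for this particular ring, and it genuinely fails for general noncommutative Dedekind domains: the Weyl algebra $A_1(k)$ has a right ideal $I$ with $I \oplus R \simeq R^2$ but $I$ not free, as the paper notes after Theorem \ref{mainthm2}. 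So there is no ``alternative that avoids explicitly decomposing $M$ and $N$''; the direct finish you mention parenthetically, from $I \simeq J$ and $m = m'$, is the only correct one, and it is exactly what the paper does. Since you flag the flawed route as optional and supply the correct conclusion alongside it, your proof stands once that aside is struck.
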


\begin{proof}
	Take $M$ and $N$ to be stably isomorphic finitely generated projective right $R$-modules. If either $M$ or $N$ are zero, the statement trivially follows, so assume that they are both nonzero. Since $R$ is a noncommutative Dedekind domain, by \eqref{ideal property}, there are nonzero (right) ideals $I_R, J_R \leq R_R$ such that
	\begin{align} \label{ex formula}
		M \simeq I \oplus R^n \quad \text{and} \quad N \simeq J \oplus R^m
	\end{align}
	for some $n, m \in \N_0$. By computing the uniform dimension, since $N$ and $M$ are stably isomorphic, we see that $n = m$. By Proposition \ref{extension lemma}, there are ideals $\III, \JJJ \lhd D$ such that
	\begin{align} \label{repr formula}
		I \simeq \III R \quad \text{and} \quad J \simeq \JJJ R \text{.}
	\end{align}
	Since $M$ and $N$ are stably isomorphic, in particular, $\III R$ and $\JJJ R$ are also stably isomorphic, which implies that $[ \III ] - [ \JJJ ]$ lies in the kernel of $\phi$. Since $\phi$ is injective by Theorem \ref{G theorem}, we have $\III \simeq \JJJ$. The modules $M$ and $N$ are then clearly isomorphic by \eqref{ex formula} and \eqref{repr formula}.
\end{proof}

In particular, the above corollary states that every finitely generated stably free $R$-module is actually free. The following observation follows from \cite[Proposition 11.1.12]{McR}.

\begin{corollary}
    Let $D$ and $R$ be as in Theorem \ref{G theorem}. Then $\glr(R) = 1$.
\end{corollary}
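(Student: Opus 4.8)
The plan is to reduce the computation of $\glr(R)$ entirely to the cancellation result just established and to translate it into the language of the general linear rank via \cite[Proposition 11.1.12]{McR}. The substantive work — that $R$ has no nonfree finitely generated stably free modules — has already been carried out in the preceding corollary; what remains is to recall the definition of $\glr$ from \cite[Section 11.1.14]{McR} and to verify that the hypotheses of the cited proposition are met. I expect no new computation to be required, only careful bookkeeping between two definitions.

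First I would record the two inputs that feed the argument. By the previous corollary, every finitely generated stably free right $R$-module is free; equivalently, every unimodular row over $R$ can be completed to an invertible matrix, and every stably free module of rank $\geq 1$ is free. On the other hand, since $R$ is a nonzero domain — and in fact not a division ring, as $D$ is assumed not to be a field — the general linear rank satisfies the lower bound $\glr(R) \geq 1$ coming from the degenerate-case conventions of the definition (a value of $0$ being reserved for trivial situations).

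Next I would apply \cite[Proposition 11.1.12]{McR}, which characterizes $\glr(R)$ in terms of the least level at which stably free modules (equivalently, unimodular rows of a given length) become free. Feeding in the fact that all finitely generated stably free $R$-modules are free forces this level to be $1$, yielding the upper bound $\glr(R) \leq 1$. Combining this with the lower bound of the previous paragraph gives $\glr(R) = 1$, as claimed. Since $R$ is a domain it has invariant basis number, so the notion of rank used here is unambiguous and the counting is consistent with the uniform-dimension argument employed in the previous corollary.

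The only genuine point requiring care — and the step I expect to be the main obstacle — is matching the precise formulation of the general linear rank in \cite[Section 11.1.14]{McR} against the hypothesis of \cite[Proposition 11.1.12]{McR}: one must confirm that ``every finitely generated stably free module is free'' is exactly the condition that pins the relevant stabilization at level $1$ rather than at some higher level, and that this applies uniformly to stably free modules of every rank, not merely of rank $1$. Once this correspondence of definitions is checked, the statement follows immediately with no further calculation.
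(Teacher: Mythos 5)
Your proposal is correct and takes essentially the same route as the paper, which likewise obtains $\glr(R)=1$ by feeding the preceding corollary (every finitely generated stably free $R$-module is free) into \cite[Proposition 11.1.12]{McR}. One minor slip worth noting: Theorem \ref{G theorem} does not assume $D$ is not a field --- that hypothesis appears only in the theorem computing $\K(R)$, $\gld(R)$, and $\sr(R)$ --- but your argument does not actually depend on it, since the lower bound $\glr(R)\geq 1$ holds by the conventions of the definition in any case.
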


The rest of this section is dedicated to the proof of Theorem \ref{G theorem}. We will require a few auxiliary definitions and lemmas. Let $I_R \leq R_R$ be a nonzero right ideal of $R$. By $I^{-1}$ denote the set
$$ I^{-1} = (R :_{l} I) = \{ \, q \in Q \mid q I \subseteq R  \, \} \text{.}$$
For example, if we consider the ideal $I = \III R$ for some ideal $\III \lhd D$, we have
$$ I^{-1} = \Big\{ \, \sum_{i=k}^{\infty} \sigma^i(q_i) x^i \,\,  \Big\vert \,\, q_i \in \III^{-1} , \,\, k \in \Z \, \Big\} \text{.}$$

Let $J$ be another right ideal of $R$. Essentially by \cite[Proposition 3.1.15]{McR}, we have that ${\rm Hom} (I, J) \cong J I^{-1}$ and we can identify ${\rm Hom}(R \oplus I, R \oplus J)$ with $2 \times 2$ matrices with entries in
$$\begin{bmatrix} R & I^{-1} \\ J & J I^{-1} \end{bmatrix} \text{.}$$
In particular,
$${\rm End}(R \oplus I) \cong \begin{bmatrix} R & I^{-1} \\ I & I I^{-1} \end{bmatrix} \text{,}$$
where the right-hand-side ring is a unital ring as well.

We will say that a row $\aaa \in \begin{bmatrix} R & I^{-1} \end{bmatrix}$ is \emph{invertible} if there is another row
$\bbb \in  \begin{bmatrix} I & I I^{-1} \end{bmatrix}$ such that the matrix
$$ A = \begin{bmatrix} \aaa \\ \bbb  \end{bmatrix}  \in \begin{bmatrix} R & I^{-1} \\ I & I I^{-1} \end{bmatrix}$$
is invertible (in this ring).

\begin{lemma} \label{irrelevant lemma} Let $I$ be a nonzero right ideal of $R$. A row $\aaa \in \begin{bmatrix} R & I^{-1} \end{bmatrix}$ is invertible if and only if there is an invertible matrix $$T \in \begin{bmatrix} R & I^{-1} \\ I & I I^{-1} \end{bmatrix}$$
such that
$ \aaa T = \begin{bmatrix} 1 &0\end{bmatrix}$.
\end{lemma}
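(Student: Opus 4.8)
The plan is to treat the formal matrix ring
$$E = \begin{bmatrix} R & I^{-1} \\ I & I I^{-1} \end{bmatrix} \cong \mathrm{End}(R \oplus I)$$
purely as an abstract unital ring and to recognize the asserted equivalence as the standard fact that a row is the first row of an invertible element precisely when it can be carried to the top row of the identity by right multiplication by a unit. The one structural point I would keep in mind is that, by the very definition of $E$, the first row of every element of $E$ lies in $\begin{bmatrix} R & I^{-1}\end{bmatrix}$ and its second row lies in $\begin{bmatrix} I & I I^{-1}\end{bmatrix}$; consequently the identity of $E$ has first row $\begin{bmatrix} 1 & 0\end{bmatrix}$, and extracting rows from elements of $E$ never takes us outside the prescribed row spaces.

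For the forward implication, I would assume $\aaa$ is invertible, so that there is a row $\bbb \in \begin{bmatrix} I & I I^{-1}\end{bmatrix}$ for which $A = \begin{bmatrix} \aaa \\ \bbb \end{bmatrix}$ is invertible in $E$, and simply set $T = A^{-1}$. Then $A T$ equals the identity of $E$, and reading off its top row gives $\aaa T = \begin{bmatrix} 1 & 0 \end{bmatrix}$, with $T$ invertible by construction. For the converse, I would assume there is an invertible $T \in E$ with $\aaa T = \begin{bmatrix} 1 & 0 \end{bmatrix}$; right-multiplying by $T^{-1}$ yields $\aaa = \begin{bmatrix} 1 & 0 \end{bmatrix} T^{-1}$, so that $\aaa$ is exactly the first row of $T^{-1} \in E$. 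Taking $\bbb$ to be the second row of $T^{-1}$ (automatically in $\begin{bmatrix} I & I I^{-1}\end{bmatrix}$), the matrix $\begin{bmatrix} \aaa \\ \bbb \end{bmatrix} = T^{-1}$ is invertible, so $\aaa$ is invertible in the sense defined above.

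I expect no genuine obstacle here: both directions amount to passing to the inverse of a single $2 \times 2$ element of $E$ and extracting a row, and the membership of the auxiliary row $\bbb$ in the correct row space is guaranteed automatically rather than requiring a separate verification. This matches the role the result is meant to play, as reflected in its label — it is a routine reformulation that will be convenient when analyzing invertibility of rows over $E$ in the sequel.
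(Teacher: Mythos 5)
Your proposal is correct and follows essentially the same route as the paper: in the forward direction take $T = A^{-1}$ and read off that $\aaa T$ is the first row of the identity, and in the converse direction multiply by $T^{-1}$ to identify $\aaa$ as the first row of the invertible matrix $T^{-1} \in \begin{bmatrix} R & I^{-1} \\ I & I I^{-1} \end{bmatrix}$. Your added remark that the second row of $T^{-1}$ automatically lies in $\begin{bmatrix} I & I I^{-1} \end{bmatrix}$ is a point the paper leaves implicit, but it is the same argument.
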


\begin{proof}
	Assume that $\aaa$ is invertible and let $A$ be the matrix from the definition before the lemma. Set $T = A^{-1}$ and note that $T$ is an invertible matrix. Since $\aaa$ is the first row of the matrix $A$, we have
	$ \aaa T = \begin{bmatrix} 1 &0\end{bmatrix}$
	as required.
	
	Now assume that there is an invertible matrix $$T \in \begin{bmatrix} R & I^{-1} \\ I & I I^{-1} \end{bmatrix}$$ with
	$ \aaa T = \begin{bmatrix} 1 &0\end{bmatrix}$.
	Multiplying this equation by $T^{-1}$, we see that $\aaa$ is the first row of $T^{-1}$, which proves that $\aaa$ is invertible.
\end{proof}

We will say that a row $\aaa \in \begin{bmatrix} R & I^{-1} \end{bmatrix}$ is \emph{unimodular} if there is a column
$$ \alphaaa \in  \begin{bmatrix} R \\ I \end{bmatrix}$$
with
$$ \aaa  \alphaaa  = 1 \text{.}$$

The following lemma provides a sufficient condition for a right ideal $I_R \leq R_R$ to have the property that any stable isomorphism yields an isomorphism (see \cite[Proposition 11.1.12]{McR} for the case where $I =R$).

\begin{lemma} \label{stab imp iso}
Let $I$ be a nonzero right ideal of $R$ such that every unimodular row $\aaa \in \begin{bmatrix} R & I^{-1} \end{bmatrix}$ is invertible.
Then, if $R \oplus I \simeq R \oplus J$ for some other right ideal $J_R \leq R_R$, we must have $I \simeq J$.
\end{lemma}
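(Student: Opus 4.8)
The plan is to represent the hypothesized isomorphism by a matrix over the ``$\Hom$-matrix'' rings introduced before Lemma \ref{irrelevant lemma}, use its inverse to exhibit a unimodular row, invoke the hypothesis together with Lemma \ref{irrelevant lemma} to clear that row, and then perform one elementary reduction to reach a block-diagonal isomorphism whose bottom-right block is the desired isomorphism $I \simeq J$. This is the usual ``completion of unimodular rows'' argument, adapted to the nonprincipal setting via the identifications ${\rm Hom}(I,J) \cong JI^{-1}$ and ${\rm End}(R\oplus I) \cong \begin{bmatrix} R & I^{-1} \\ I & II^{-1}\end{bmatrix}$.

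In detail, an isomorphism $R \oplus I \to R \oplus J$ is an invertible element
$$ M = \begin{bmatrix} a & b \\ c & d \end{bmatrix} \in \begin{bmatrix} R & I^{-1} \\ J & JI^{-1} \end{bmatrix} \cong {\rm Hom}(R \oplus I, R \oplus J) \text{,}$$
with inverse $M^{-1} = \begin{bmatrix} a' & b' \\ c' & d' \end{bmatrix} \in \begin{bmatrix} R & J^{-1} \\ I & IJ^{-1}\end{bmatrix} \cong {\rm Hom}(R \oplus J, R \oplus I)$. Reading off the $(1,1)$ entry of $M M^{-1} = \begin{bmatrix} 1 & 0 \\ 0 & 1\end{bmatrix}$ gives $aa' + bc' = 1$ with $a,a' \in R$, $b \in I^{-1}$, $c' \in I$, so the first row $\aaa = \begin{bmatrix} a & b \end{bmatrix} \in \begin{bmatrix} R & I^{-1}\end{bmatrix}$ is unimodular, witnessed by the column $\begin{bmatrix} a' \\ c' \end{bmatrix} \in \begin{bmatrix} R \\ I \end{bmatrix}$. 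By hypothesis $\aaa$ is then invertible, so Lemma \ref{irrelevant lemma} furnishes an invertible $T \in \begin{bmatrix} R & I^{-1} \\ I & II^{-1}\end{bmatrix} \cong {\rm End}(R \oplus I)$ with $\aaa T = \begin{bmatrix} 1 & 0 \end{bmatrix}$. The product $MT$ again lands in $\begin{bmatrix} R & I^{-1} \\ J & JI^{-1}\end{bmatrix}$ and is an isomorphism $R \oplus I \to R \oplus J$ with first row $\begin{bmatrix} 1 & 0\end{bmatrix}$, say $MT = \begin{bmatrix} 1 & 0 \\ c'' & d''\end{bmatrix}$ with $c'' \in J$, $d'' \in JI^{-1}$. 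Left-multiplying by the elementary matrix $\begin{bmatrix} 1 & 0 \\ -c'' & 1\end{bmatrix}$, which is invertible in ${\rm End}(R \oplus J) \cong \begin{bmatrix} R & J^{-1} \\ J & JJ^{-1}\end{bmatrix}$, clears the lower-left corner and yields a block-diagonal isomorphism $\begin{bmatrix} 1 & 0 \\ 0 & d''\end{bmatrix} \colon R \oplus I \to R \oplus J$. Comparing entries of this matrix against its inverse in both orders forces the off-diagonal entries of the inverse to vanish and shows that $d'' \in JI^{-1} \cong {\rm Hom}(I,J)$ has a two-sided inverse in $IJ^{-1} \cong {\rm Hom}(J,I)$; hence $d''$ is an isomorphism $I \to J$, giving $I \simeq J$.

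The main obstacle is purely bookkeeping: one must keep careful track of the one-sided module structures (using $I^{-1} I \subseteq R$, that $I^{-1}$ is a left $R$-module, that $I$ is a right ideal, and so on) to confirm that each matrix product genuinely lands in the intended $\Hom$-matrix ring, and that the matrices $T$ and $\begin{bmatrix} 1 & 0 \\ -c'' & 1\end{bmatrix}$ are invertible in the correct endomorphism rings ${\rm End}(R \oplus I)$ and ${\rm End}(R \oplus J)$ respectively. Once the entry types are verified, the reduction itself is routine and the conclusion is immediate.
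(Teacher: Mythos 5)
Your proof is correct and follows essentially the same route as the paper's own: represent the isomorphism as an invertible $2 \times 2$ matrix over the ${\rm Hom}$-matrix rings, note that its first row is unimodular via the first column of the inverse, invoke the hypothesis together with Lemma \ref{irrelevant lemma} to get $\aaa T = \begin{bmatrix} 1 & 0 \end{bmatrix}$, and read the isomorphism $I \simeq J$ off the bottom-right entries --- your extra elementary-matrix step clearing the lower-left corner is only a cosmetic variant of the paper's observation that the inverse of the lower triangular matrix $AT$ is itself lower triangular because $R$ is a domain. The one (harmless) omission is the paper's one-line check, by comparing uniform dimensions, that $J \neq 0$, which is needed before the identifications involving $J^{-1}$ and $JI^{-1}$ may be used.
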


\begin{proof}
	Assume that $R \oplus I \simeq R \oplus J$. By computing the uniform dimension, we see that $J$ is nonzero. Therefore, by the identification with $2 \times 2$ matrices mentioned before, there exist matrices
	$$ A \in \begin{bmatrix} R & I^{-1} \\ J & J I^{-1} \end{bmatrix} \quad \text{and} \quad B \in \begin{bmatrix} R & J^{-1} \\  I & I J^{-1} \end{bmatrix} \text{,}$$
	which are inverse to each other, i.e., $AB = BA = I_{2\times 2}$. Denote by $\aaa$ the first row of $A$ and by $\alphaaa$ the first column of $B$. Since $\aaa \alphaaa = 1$, the row $\aaa$ is unimodular, and thus, by assumption, it is invertible. By Lemma \ref{irrelevant lemma}, there is an invertible matrix
	$$ T \in \begin{bmatrix} R & I^{-1} \\ I & I I^{-1} \end{bmatrix}$$
	such that $ \aaa T = \begin{bmatrix} 1 &0\end{bmatrix}$. Then the matrix
	$A' = A T$ %\in  \begin{bmatrix} R & I^{-1} \\ J & J I^{-1} \end{bmatrix}$$
	is a lower triangular matrix with the inverse
	$B' = T^{-1} B$, which is also lower triangular, since $R$ is a domain. %\in \begin{bmatrix} R & J^{-1} \\  I & I J^{-1}\end{bmatrix} \text{.}$$
	In particular, for the bottom right entry of $A'$, say $a' \in J I^{-1}$, and the bottom right entry of $B'$, say $b' \in I J^{-1}$, we have $a' b'=b' a' =1$, which implies $I \simeq J$.
\end{proof}

To prove Theorem \ref{G theorem} we will prove that any unimodular row $\aaa \in \begin{bmatrix} R & I^{-1} \end{bmatrix}$ is invertible. By Proposition \ref{extension lemma}, it is enough to consider ideals of the form $I = \III R$ for some nonzero $\III \lhd D$. We have $\aaa = \sum_{i=k}^{\infty} \aaa_i x^i$ for some
$$\aaa_i \in  \begin{bmatrix} D & \sigma^{i}(\III^{-1})\end{bmatrix} \text{.}$$
%Write $\sigma(\aaa)$ for $\sum_{i=k}^{\infty} \sigma(\aaa_i) x^i$.
Without loss of generality, we may assume that $k = 0$ and $\aaa_0 \neq 0$. Indeed, a row $\aaa$ is unimodular if and only if $x^{-k}\aaa$ is. To see this, let $$\alphaaa \in \begin{bmatrix} R \\ I \end{bmatrix}$$ be such that $\aaa \alphaaa = 1$. Then $$\alphaaa x^k \in \begin{bmatrix} R \\ I \end{bmatrix}$$ and we have $\left( x^{-k}\aaa  \right) \left(  \alphaaa x^{k} \right) = 1$, proving that $x^{-k} \aaa$ is unimodular. The reverse implication follows by symmetry. Similarly,  a row $\aaa$ is invertible if and only if $x^{-k}\aaa$ is. Thus, we can really assume that $k=0$ and $\aaa_0 \neq 0$.

We start with a very general definition. It might not be clear at first why this definition is needed (namely, why is the ideal $\AAA$ appearing in the definition), but it turns out to be a crucial generalization for proving the wanted theorem.

\begin{definition} \label{def1}
	Let $n \in \N_0$, and let $\AAA \lhd D$ be a nonzero ideal. For each  $i \in \N_0$, let $\aaa_i \in \begin{bmatrix} \sigma^{i}(\AAA^{-1})& \sigma^{i}(\III^{-1}\AAA)\end{bmatrix}$. We say that $\aaa = \sum_{i=0}^\infty \aaa_i x^i$ is \emph{$n$-unimodular} with respect to $\AAA$ if and only if there is a column $\alphaaa = \sum_{i=0}^{\infty} \alphaaa_i x^i$ with $$\alphaaa_i \in \begin{bmatrix} \AAA \\ \III \AAA^{-1}\end{bmatrix}$$ such that 
	$$ \aaa \alphaaa = x^n + \sum_{i=n+1}^\infty h_i x^{i}$$
	for some $h_i \in D$.
\end{definition}

\begin{lemma} \label{lem1}
	If a row $\aaa = \sum_{i=0}^{\infty} \aaa_i x^i \in \begin{bmatrix} R & I^{-1}\end{bmatrix}$ %with $\aaa_0 \neq 0$ 
	is unimodular, it is $n$-unimodular with respect to $\AAA = D$ for some $n \in \N_0$.
\end{lemma}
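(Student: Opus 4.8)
The plan is to exploit the unimodularity witness directly and convert it into a witness for $n$-unimodularity by a single degree shift. By definition of \emph{unimodular}, there is a column $\alphaaa \in \begin{bmatrix} R \\ I \end{bmatrix}$ with $\aaa \alphaaa = 1$. Writing $\alphaaa = \sum_{i \geq m} \alphaaa_i x^i$ with $\alphaaa_m \neq 0$, its top entry lies in $R$ and its bottom entry in $I = \III R$, so each coefficient satisfies $\alphaaa_i \in \begin{bmatrix} D \\ \III \end{bmatrix}$. This is exactly the coefficient constraint imposed on the column in Definition \ref{def1} when $\AAA = D$ (there $\AAA^{-1} = D$ and $\III\AAA^{-1} = \III$). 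The only discrepancy between $\alphaaa$ and an admissible $n$-unimodular column is that $\alphaaa$ may be supported starting at a negative power $x^m$, whereas the definition demands a column supported in nonnegative degrees.

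First I would pin down the sign of $m$. Since $\aaa = \sum_{i \geq 0} \aaa_i x^i$ is supported in degrees $\geq 0$ and $\alphaaa$ in degrees $\geq m$, the product $\aaa\alphaaa$ is supported in degrees $\geq m$ (its degree-$m$ coefficient being $\aaa_0 \alphaaa_m$). As $\aaa\alphaaa = 1$ has a nonzero term precisely in degree $0$, this forces $m \leq 0$.

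Then I set $n = -m \in \No$ and $\beta = \alphaaa x^{n}$. Right multiplication by $x^{n} \in R$ preserves both $R$ and the right ideal $I = \III R$, so $\beta \in \begin{bmatrix} R \\ I \end{bmatrix}$ is again a column with coefficients in $\begin{bmatrix} D \\ \III \end{bmatrix}$, now supported in degrees $\geq 0$, i.e. $\beta = \sum_{j \geq 0} \beta_j x^j$ with $\beta_j = \alphaaa_{j-n}$. Finally $\aaa\beta = (\aaa\alphaaa) x^{n} = x^{n}$, which is trivially of the form $x^{n} + \sum_{i \geq n+1} h_i x^{i}$ with all $h_i = 0$. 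This exhibits $\aaa$ as $n$-unimodular with respect to $\AAA = D$, completing the argument.

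As for difficulty, this lemma is essentially a bookkeeping argument, and I do not expect a genuine obstacle: coefficient membership is preserved under the shift, and the identity $\aaa\beta = x^{n}$ is immediate once $\beta = \alphaaa x^{-m}$ is chosen. The one point that must be checked rather than asserted is the inequality $m \leq 0$ — the observation that the unimodular witness cannot begin in a strictly positive degree, which is what guarantees $n \in \No$ rather than merely $n \in \Z$. Note that the resulting $n$ is uncontrolled, which is presumably why the subsequent lemmas must carry the parameter $n$ (and the auxiliary ideal $\AAA$) along and reduce them inductively.
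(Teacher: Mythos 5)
Your proof is correct and follows exactly the paper's argument: take the witness column $\alphaaa$ with $\aaa\alphaaa = 1$ and multiply on the right by $x^{n}$ to shift its support into nonnegative degrees, noting that the coefficients in $\begin{bmatrix} D \\ \III \end{bmatrix}$ are unaffected and $\aaa(\alphaaa x^{n}) = x^{n}$. Your explicit verification that the witness cannot start in strictly positive degree (so $n \in \N_0$) is a point the paper leaves implicit in its notation $\sum_{i=-n}^{\infty}\alphaaa_i x^i$, and is a welcome bit of extra care.
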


\begin{proof}
	Assume that $\aaa$ is unimodular, i.e., there is a vector $$\alphaaa = \sum_{i=-n}^\infty \alphaaa_i x^i \in \begin{bmatrix} R \\ I\end{bmatrix}$$ such that $\aaa \alphaaa = 1$. Let $\alphaaa' = \alphaaa x^{n} = \sum_{i=0}^\infty \alphaaa_{i-n} x^i$. Since $$\alphaaa_{i-n} \in \begin{bmatrix} D \\ \III \end{bmatrix}$$ for every $i$ and $$\aaa \alphaaa' = x^n\text{,}$$ this shows that $\aaa$ is $n$-unimodular with respect to $\AAA = D$.
	%
	% Now assume that $\aaa$ is $n$-unimodular with respect to $D$, i.e., there is a column $\alphaaa = \sum_{i=0}^{\infty} \alphaaa_i x^i$ with $$\alphaaa_i \in \begin{bmatrix} D \\ \III \end{bmatrix}$$ such that
	% $$ \aaa \alphaaa = x^n + \sum_{i=n+1}^\infty f_i x^{i}$$
	% for some $f_i \in D$.
	% This means that $r = \aaa \alphaaa$ is an invertible element of $R$ [ref]. Thus,
	% $$ \alphaaa' = \alphaaa \, r^{-1} \in \begin{bmatrix} R \\ I\end{bmatrix}$$
	% satisfies $\aaa \alphaaa' = 1$, which proves that $\aaa$ is unimodular.
\end{proof}

We can define a generalization of invertibility of a row in a similar way.

\begin{definition} \label{def}
	Let $n \in \N_0$, and let $\AAA \lhd D$ be a nonzero ideal. For each $i \in \N_0$, let $\aaa_i \in \begin{bmatrix} \sigma^{i}(\AAA^{-1})& \sigma^{i}(\III^{-1}\AAA)\end{bmatrix}$. We say that $\aaa = \sum_{i=0}^\infty \aaa_i x^i$ is \emph{$n$-invertible} with respect to $\AAA$ if and only if there is a row $\bbb = \sum_{i=0}^\infty \bbb_i x^i$ with $\bbb_i \in \begin{bmatrix} \III\sigma^{i}(\AAA^{-1})& \III\sigma^{i}(\III^{-1}\AAA)\end{bmatrix}$ and a matrix
	$T  = \sum_{i=0}^{\infty} T_i x^i$
	with $$T_i \in \begin{bmatrix} \AAA& \sigma^{i}(\III^{-1}) \AAA \\ \III \AAA^{-1} & \III \sigma^{i}(\III^{-1})\AAA^{-1}\end{bmatrix}$$ such that 
	$$ \begin{bmatrix} \aaa \\ \bbb \end{bmatrix} T = \sum_{i=n}^{\infty} H_i x^{i}$$
	for some matrices $$H_i \in \begin{bmatrix} D & \sigma^{i}(\III^{-1}) \\ \III & \III \sigma^i (\III^{-1}) \end{bmatrix}$$ with $\det(H_n)$ generating $\III \sigma^n(\III^{-1})$.
\end{definition}

\begin{lemma} \label{lem2}
	If a row $\aaa = \sum_{i=0}^{\infty} \aaa_i x^i \in \begin{bmatrix} D & I^{-1}\end{bmatrix}$ %with $\aaa_0 \neq 0$
	is $n$-invertible with respect to $\AAA = D$ for some $n \in \N_0$, then it is invertible.
\end{lemma}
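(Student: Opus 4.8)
The plan is to unpack the data provided by $n$-invertibility and show that the resulting product matrix is a unit of the endomorphism ring $\operatorname{End}(R\oplus I)\cong\begin{bmatrix} R & I^{-1}\\ I & II^{-1}\end{bmatrix}$, from which the invertibility of $\aaa$ follows at once. Since $\aaa$ is $n$-invertible with respect to $\AAA=D$, Definition \ref{def} (specialized to $\AAA=D$) furnishes a row $\bbb=\sum_{i\ge0}\bbb_i x^i$ with $\bbb_i\in\begin{bmatrix}\III & \III\sigma^i(\III^{-1})\end{bmatrix}$ and a matrix $T=\sum_{i\ge0}T_i x^i$ with $T_i\in\begin{bmatrix} D & \sigma^i(\III^{-1})\\ \III & \III\sigma^i(\III^{-1})\end{bmatrix}$ such that, writing $M=\begin{bmatrix}\aaa\\ \bbb\end{bmatrix}$, we have
$$ MT=G:=\sum_{i=n}^{\infty} H_i x^i, $$
where the lowest coefficient $H_n$ has $\det(H_n)$ generating $\III\sigma^n(\III^{-1})$. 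Observe that $M,T\in\operatorname{End}(R\oplus I)$ and that $\bbb\in\begin{bmatrix} I & II^{-1}\end{bmatrix}$, so $\bbb$ is an admissible second row for the definition of invertibility of $\aaa$.

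The heart of the argument is to prove that $G$ is a unit of $\operatorname{End}(R\oplus I)$. Since $\det(H_n)$ generates the nonzero fractional ideal $\III\sigma^n(\III^{-1})$, in particular $\det(H_n)\neq0$, so $H_n\in GL_2(K)$ and $G$ is invertible in $M_2(Q)$; I would construct its inverse directly as a Laurent series $G^{-1}=\sum_{i=-n}^{\infty} L_i x^i$. Matching lowest coefficients forces $L_{-n}=\sigma^{-n}(H_n^{-1})$, and the decisive point is that the hypothesis that $\det(H_n)$ generates \emph{exactly} $\III\sigma^n(\III^{-1})$ (not merely lies in it) makes the entries of $\sigma^{-n}(H_n^{-1})$ land precisely in $\begin{bmatrix} D & \sigma^{-n}(\III^{-1})\\ \III & \III\sigma^{-n}(\III^{-1})\end{bmatrix}$; this is a short computation using $\III\III^{-1}=D$ and the adjugate formula for $H_n^{-1}$. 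The higher coefficients $L_i$ are then determined recursively, and one checks by induction that each $L_i$ lies in the $i$-th module $\begin{bmatrix} D & \sigma^i(\III^{-1})\\ \III & \III\sigma^i(\III^{-1})\end{bmatrix}$, so that $G^{-1}\in\operatorname{End}(R\oplus I)$. This module-membership bookkeeping is the main obstacle and the only genuinely technical part; alternatively one could invoke a twisted analogue of \cite[Proposition 4.4]{Tu} adapted to this endomorphism ring.

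Once $G^{-1}\in\operatorname{End}(R\oplus I)$ is established, the conclusion is quick. From $MT=G$ we obtain $M\,(TG^{-1})=1$ in $\operatorname{End}(R\oplus I)$, so $M$ is a surjective endomorphism of the module $R\oplus I$. Since $R$ is noetherian by Theorem \ref{dedekind}, the module $R\oplus I$ is noetherian, and a surjective endomorphism of a noetherian module is injective; hence $M$ is an isomorphism, i.e.\ a unit of $\operatorname{End}(R\oplus I)$. As $M=\begin{bmatrix}\aaa\\ \bbb\end{bmatrix}$ with $\bbb\in\begin{bmatrix} I & II^{-1}\end{bmatrix}$, this is exactly the statement that the row $\aaa$ is invertible, as required.
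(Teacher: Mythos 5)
Your proof is correct and follows essentially the same route as the paper: the paper merely normalizes first, replacing $T$ by $Tx^{-n}\diag(1,1/\det(H_n))$ so that the product series has lowest coefficient of determinant $1$ lying in $\begin{bmatrix} D & \III^{-1} \\ \III & \III\III^{-1}\end{bmatrix}$ and is therefore invertible in $\begin{bmatrix} R & I^{-1} \\ I & II^{-1}\end{bmatrix}$, which packages exactly the adjugate computation and the inductive coefficient bookkeeping that you carry out directly on $G$ (and which does close, as you claim). Your noetherian upgrade from the one-sided identity $M(TG^{-1})=1$ to invertibility is also fine and matches what the paper leaves implicit when it declares $T'=Tx^{-n}\diag(1,1/\det(H_n))S^{-1}$ an inverse of $\begin{bmatrix}\aaa\\ \bbb\end{bmatrix}$ (alternatively, one-sided inverses in $M_2(Q)$ are automatically two-sided).
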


\begin{proof}
% 	Assume that $\aaa$ is invertible, i.e., there is another row
% $\bbb \in  \begin{bmatrix} I & I I^{-1} \end{bmatrix}$ such that the matrix
% $$ A = \begin{bmatrix} \aaa \\ \bbb  \end{bmatrix}  \in \begin{bmatrix} R & I^{-1} \\ I & I I^{-1} \end{bmatrix}$$
% is invertible (in this ring). Let $A^{-1} = \sum_{i=-n}^\infty A_{i} x^i$, let $\mu \in K$ be a generator for the principal ideal $\III \sigma^n(\III^{-1})$, and denote $M = \diag(1, \mu)$. Then the matrix $T = A^{-1} M x^{n} =  \sum_{i=0}^\infty T_{i} x^i$ with $$T_i = A_{i-n} \sigma^{i-n}(M) \in \begin{bmatrix} D& \sigma^{i}(\III^{-1})\\ \III & \III \sigma^{i}(\III^{-1})\end{bmatrix}$$ satisfies
% $$  \begin{bmatrix} \aaa \\ \bbb \end{bmatrix} T = A T = M x^{n} \text{,}$$
% which shows that $\aaa$ is $n$-invertible with respect to $\AAA = D$.
%
	Assume that $\aaa$ is $n$-invertible with respect to $\AAA = D$.
    Take $\bbb$, $T$ and $H_i$ to be as in the previous definition.
    %i.e., there is a row $\bbb = \sum_{i=0}^\infty \bbb_i x^i$ with $\bbb_i \in \begin{bmatrix} \III & \III\sigma^{i}(\III^{-1})\end{bmatrix}$ and a matrix
	%$T  = \sum_{i=0}^{\infty} T_i x^i$
	%with $$T_i \in \begin{bmatrix} D & \sigma^{i}(\III^{-1}) \\ \III & \III \sigma^{i}(\III^{-1}) \end{bmatrix}$$ such that 
	%$$ \begin{bmatrix} \aaa \\ \bbb \end{bmatrix} T = F_n x^n + \sum_{i = n+1}^\infty F_i x^i$$
	%for some matrices $$F_i \in \begin{bmatrix} D & \sigma^{i}(\III^{-1}) \\ \III & \III \sigma^i (\III^{-1}) \end{bmatrix}$$ such that $\det(F_n)$ generates $\III \sigma^n(\III^{-1})$.
    %
    Let $\mu = 1/\det(H_n)$, denote $M = \diag(1, \mu)$, and let
	$$ S = \begin{bmatrix} \aaa \\ \bbb \end{bmatrix} T x^{-n} M = H_n M + \sum_{i=1}^\infty H_{i+n} \sigma^i(M) x^i \text{.}$$
	Since $$H_n M \in \begin{bmatrix} D & \III^{-1} \\ \III & \III \III^{-1} \end{bmatrix}$$ has determinant $1$, it is an invertible matrix in this ring, and thus, the matrix $S$ is invertible in $$\begin{bmatrix} D &I^{-1} \\ I & I I^{-1} \end{bmatrix} \text{.}$$ Therefore, $T' = T x^{-n} M S^{-1}$ is an inverse of the matrix $$A = \begin{bmatrix} \aaa \\ \bbb \end{bmatrix}\text{,}$$ which shows that $\aaa$ is invertible.
\end{proof}

In our next step we will, for a row $\aaa$, define the row $\aaaw$. We will show that if $\aaa$ is $n$-unimodular, then $\aaaw$ is $n-1$-unimodular, and that if $\aaaw$ is $n-1$-invertible, then $\aaa$ is $n$-invertible (with respect to a certain ideal). This will enable us to prove that $R$ satisfies the assumption of Lemma \ref{stab imp iso} by a simple induction on $n$. But before that, we need to decompose rows in a suitable way.

For a row $\ccc = \begin{bmatrix} c^{(1)} & c^{(2)} \end{bmatrix} \in \begin{bmatrix} K & K \end{bmatrix}$, denote by $\ccct$ the orthogonal column
$$ \ccct = \begin{bmatrix} c^{(2)} \\ -c^{(1)}  \end{bmatrix} \text{,}$$
which satisfies $ \ccc \ccct = 0$.
Let $\aaa = \sum_{i=0}^{\infty} \aaa_i x^i$ with $\aaa_i \in \begin{bmatrix} \sigma^{i}(\AAA^{-1})& \sigma^{i}(\III^{-1}\AAA)\end{bmatrix}$ for $i \in \N_0$ and $\aaa_0 \neq 0$. Then $\aaa_0 = \begin{bmatrix} s & q \end{bmatrix}$ for some $s \in \AAA^{-1}$ and $q \in \III^{-1} \AAA$. Let
$$\BBB = s \AAA + q \III \AAA^{-1} \lhd D \text{.}$$
Since $s \AAA \BBB^{-1} + q \III \AAA^{-1} \BBB^{-1} = D$, there is a row $\aaac_0 \in \begin{bmatrix} \III \AAA^{-1} \BBB^{-1} & \AAA \BBB^{-1} \end{bmatrix}$ such that
$$ \det \begin{bmatrix} \aaa_0 \\ \aaac_0 \end{bmatrix} = \aaa_0 \aaact_0 = 1 \text{.}$$
Fix such an $\aaac_0$. Note that the matrix
\begin{align} \label{A} A = \aaact_0 \aaa_0 - \aaat_0 \aaac_0 \end{align}
satisfies $\aaa_0 A = \aaa_0$ and $\aaac_0 A = \aaac_0$, and since $\aaa_0$ and $\aaac_0$ are linearly independent, this implies that $A$ is the identity matrix. This enables us to decompose rows in a suitable manner, namely, we can write rows $\aaa_i \in \begin{bmatrix} \sigma^{i}(\AAA^{-1})& \sigma^{i}(\III^{-1}\AAA)\end{bmatrix}$ as
$$ \aaa_i = \aaa_i \sigma^i (A) = \sa_i \sigma^i(\aaa_0) - \sac_i \sigma^i(\aaac_0)$$
with $\sa_i = \aaa_i \sigma^i(\aaact_0)\in \sigma^i(\BBB^{-1})$ and $\sac_i = \aaa_i \sigma^i(\aaat_0)\in \sigma^i(\III^{-1} \BBB)$.

Fix a $\lambda \in K$ such that
\begin{align}\label{lambda}  \lambda D =  \III \BBB^{-1} \sigma(\III^{-1} \BBB)\end{align}
and denote
$$\aaaw_i = \begin{bmatrix} \sa_i & \sac_{i+1} / \sigma^{i}(\lambda) \end{bmatrix} \in \begin{bmatrix} \sigma^i(\BBB^{-1}) & \sigma^i (\III^{-1} \BBB) \end{bmatrix} \text{.}$$
Let $\aaaw = \sum_{i=0}^\infty \aaaw_i x^i$. The next two lemmas connect the previously defined properties of a row $\aaa$ to the row $\aaaw$.

\begin{lemma} \label{lemma uni}
	Let $n \geq 1$. If a row $\aaa$ is $n$-unimodular with respect to $\AAA$, then $\aaaw$ is $n-1$-unimodular with respect to $\BBB$.
\end{lemma}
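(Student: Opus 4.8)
The plan is to transport a unimodularity certificate for $\aaa$ directly into one for $\aaaw$, using the decomposition $\aaa_i = \sa_i\sigma^i(\aaa_0) - \sac_i\sigma^i(\aaac_0)$ recorded just before the lemma. Suppose $\aaa$ is $n$-unimodular with respect to $\AAA$, witnessed by a column $\alphaaa = \sum_{i=0}^\infty \alphaaa_i x^i$ with $\alphaaa_i \in \begin{bmatrix}\AAA \\ \III\AAA^{-1}\end{bmatrix}$ and $\aaa\alphaaa = x^n + \sum_{i=n+1}^\infty h_i x^i$, $h_i \in D$. I would introduce the scalars $P_j = \aaa_0\alphaaa_j$ and $\rho_j = \aaac_0\alphaaa_j$; writing $\aaa_0 = \begin{bmatrix} s & q\end{bmatrix}$ and recalling $\BBB = s\AAA + q\III\AAA^{-1}$, one checks at once that $P_j \in \BBB$ and $\rho_j \in \III\BBB^{-1}$. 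Expanding $\aaa\alphaaa$ via the decomposition, the coefficient of $x^k$ becomes
\begin{align*}
\sum_{i+j=k}\bigl(\sa_i\sigma^i(P_j) - \sac_i\sigma^i(\rho_j)\bigr),
\end{align*}
which vanishes for $k<n$ and equals $1$ for $k=n$.

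The candidate certificate for $\aaaw$ is $\alphaaaw = \sum_{i=0}^\infty\alphaaaw_i x^i$ with $\alphaaaw_i = \begin{bmatrix} P_{i+1} \\ -\lambda\sigma(\rho_i)\end{bmatrix}$; the defining property of $\lambda$ in \eqref{lambda} guarantees $-\lambda\sigma(\rho_i) \in \III\BBB^{-1}$, so that $\alphaaaw_i \in \begin{bmatrix}\BBB \\ \III\BBB^{-1}\end{bmatrix}$ as required by Definition \ref{def1}. Using $\aaaw_i = \begin{bmatrix}\sa_i & \sac_{i+1}/\sigma^i(\lambda)\end{bmatrix}$, the term $\aaaw_i\sigma^i(\alphaaaw_j)$ collapses to $\sa_i\sigma^i(P_{j+1}) - \sac_{i+1}\sigma^{i+1}(\rho_j)$, the powers of $\sigma(\lambda)$ cancelling. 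Summing over $i+j=k$ and reindexing by $j\mapsto j+1$ in the $\sa$-sum and $i\mapsto i+1$ in the $\sac$-sum, I would compare the coefficient of $x^k$ in $\aaaw\alphaaaw$ against the coefficient of $x^{k+1}$ in $\aaa\alphaaa$.

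The two decisive boundary identities are $\sa_0 = \aaa_0\aaact_0 = 1$ and $\sac_0 = \aaa_0\aaat_0 = 0$. The latter makes the $i=0$ term of the $\sac$-sum vanish; the former, combined with the hypothesis $n\ge 1$ (so that the coefficient of $x^0$ in $\aaa\alphaaa$, which is $\sa_0 P_0 - \sac_0\rho_0 = P_0$, is zero), forces $P_0 = 0$, exactly what is needed to restore the missing $j=0$ term of the $\sa$-sum. With $P_0 = \sac_0 = 0$ the two reindexed sums complete to full ranges, giving the clean identity that the coefficient of $x^k$ in $\aaaw\alphaaaw$ equals the coefficient of $x^{k+1}$ in $\aaa\alphaaa$ for every $k$. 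Hence $\aaaw\alphaaaw = x^{n-1} + \sum_{i\ge n}(\cdots)x^i$ with coefficients in $D$, so $\aaaw$ is $(n-1)$-unimodular with respect to $\BBB$. I expect the main obstacle to be purely bookkeeping: tracking the $\sigma$-twists and the graded fractional-ideal memberships through the reindexing, and recognizing that the hypothesis $n\ge 1$ enters precisely to secure $P_0=0$, without which the degree-shift identity would break down at the bottom term.
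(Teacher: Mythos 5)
Your proof is correct and follows essentially the same route as the paper's: you decompose $\alphaaa_i$ against the basis $\aaa_0,\aaac_0$ (your $P_i,\rho_i$ are the paper's $\st_i,-\stc_i$), build the same candidate column $\alphaaaw_i=\begin{bmatrix} P_{i+1} \\ -\lambda\sigma(\rho_i)\end{bmatrix}$, and use the same boundary facts $\sa_0=1$, $\sac_0=0$, and (from $n\geq 1$) $P_0=0$ to reindex. Your formulation as a full degree-shift identity between the coefficients of $\aaaw\alphaaaw$ and $\aaa\alphaaa$ is a slightly cleaner packaging of the paper's term-by-term system \eqref{system2}--\eqref{system3}, but the argument is the same.
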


\begin{proof}
    Assume that $\aaa$ is $n$-unimodular with respect to $\AAA$, i.e., there is a column $\alphaaa = \sum_{i=0}^{\infty} \alphaaa_i x^i$ with $$\alphaaa_i \in \begin{bmatrix} \AAA \\ \III \AAA^{-1}\end{bmatrix}$$ such that 
	$$ \aaa \alphaaa = x^n + \sum_{i=n+1}^{\infty} h_i  x^i$$
	for some $h_i \in D$.
	This is equivalent to the system of equations
	\begin{align} \label{system1}
		\sum_{i=0}^k \aaa_i \sigma^i(\alphaaa_{k-i}) = \delta_{kn}
	\end{align}
	for $k = 0, 1, \ldots, n$. Here, $\delta_{kn} = 1$ if $k = n$, and $\delta_{kn} = 0$ otherwise.
	For the matrix $A$ from \eqref{A}, we have
	$$\alphaaa_i = A \alphaaa_i =   \aaact_0 \st_i + \aaat_0 \stc_i$$
	for $\st_i = \aaa_0 \alphaaa_i \in \BBB$ and $\stc_i= -\aaac_0 \alphaaa_i \in \III \BBB^{-1}$. 	Then, the system of equations \eqref{system1} can be written as
	\begin{align} \label{system2}
	 \sum_{i=0}^k \sa_i \sigma^i(\st_{k-i}) + \sac_i \sigma^i(\stc_{k-i}) = \delta_{kn}
	 \end{align}
	 for $k = 0, 1, \ldots, n$.
	In particular, taking into account that $\sa_0 = 1$ and $\sac_0 = 0$, the $k=0$ equation of \eqref{system2} states
	$$ \st_0 = 0 \text{.}$$
	Considering this, we can rewrite the system \eqref{system2} as
	\begin{align} \label{system3}
	 \sum_{i=0}^{k-1} \sa_i \sigma^i(\st_{k-i}) + \sac_{i+1} \sigma^{i+1}(\stc_{k-i-1}) = \delta_{kn}
	 \end{align}
	 for $k=1,\ldots,n$. For $\lambda$ from \eqref{lambda}, let 
	 $$ \alphaaaw_i = \begin{bmatrix} \st_{i+1} \\ \lambda \, \sigma(\stc_{i}) \end{bmatrix} \in \begin{bmatrix} \BBB \\ \III \BBB^{-1} \end{bmatrix} \text{.}$$
	 Then, we can write the system of equations \eqref{system3} as
	 \begin{align*}
	 \sum_{i=0}^{k} \aaaw_i \sigma^{i}( \alphaaaw_{k-1})= \delta_{k,n-1}
	 \end{align*}
	 for $k = 0, 1, \ldots, n-1$, which shows that $\aaaw$ is $n-1$-unimodular with respect to $\BBB$.
\end{proof}

% \begin{remark}
%     The reverse of Lemma \ref{lemma uni} also holds, but we will not need it.
% \end{remark}

\begin{lemma} \label{lemma inv}
	Let $n \geq 1$. If $\aaaw$ is $n-1$-invertible with respect to $\BBB$, then $\aaa$ is $n$-invertible with respect to $\AAA$.
\end{lemma}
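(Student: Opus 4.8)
The plan is to transport the $(n-1)$-invertibility data for $\aaaw$ to $\aaa$ through the explicit change of basis that is already implicit in the construction preceding the lemma. Set $u = \sum_{i\geq 0}\sa_i x^i$ and $v = \sum_{i\geq 0}\sac_i x^i$. The decomposition $\aaa_i = \sa_i\sigma^i(\aaa_0) - \sac_i\sigma^i(\aaac_0)$ gives $\aaa = u\,\aaa_0 - v\,\aaac_0$, while $\sac_0 = \aaa_0\aaat_0 = 0$ together with the definition of $\aaaw_i$ yields $\aaaw = \begin{bmatrix} u & v x^{-1}\lambda^{-1}\end{bmatrix}$. Hence, over the division ring $Q$,
$$ \aaa = \aaaw\, C, \qquad C = \begin{bmatrix} 1 & 0 \\ 0 & -\lambda x\end{bmatrix}\begin{bmatrix}\aaa_0 \\ \aaac_0\end{bmatrix} = \begin{bmatrix}\aaa_0 \\ -\lambda x\,\aaac_0\end{bmatrix}, $$
and $C$ is invertible because $\det\begin{bmatrix}\aaa_0\\\aaac_0\end{bmatrix} = \aaa_0\aaact_0 = 1$; explicitly $C^{-1} = \begin{bmatrix}\aaact_0 & -\aaat_0\end{bmatrix}\begin{bmatrix}1&0\\0& -x^{-1}\lambda^{-1}\end{bmatrix}$.

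Now let $\bbbw$, $\TTTw$ and $\FFFw_i$ witness that $\aaaw$ is $(n-1)$-invertible with respect to $\BBB$, so that $\begin{bmatrix}\aaaw \\ \bbbw\end{bmatrix}\TTTw = \sum_{i\geq n-1}\FFFw_i x^i$ with $\det(\FFFw_{n-1})$ generating $\III\sigma^{n-1}(\III^{-1})$. Because $\sigma$ acts trivially on $G(D)$, the fractional ideal $\III\sigma(\III^{-1})$ is principal (take $\JJJ = \III^{-1}$ in the criterion that $\JJJ^{-1}\sigma(\JJJ)$ be principal); fix a generator $\mu$ and put $G = \diag(1,\mu)$. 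I then define $\bbb = \bbbw\, C$ and $T = C^{-1}\,\TTTw\, G\, x$. Since $\begin{bmatrix}\aaa\\\bbb\end{bmatrix} = \begin{bmatrix}\aaaw\\\bbbw\end{bmatrix}C$, this gives
$$ \begin{bmatrix}\aaa\\\bbb\end{bmatrix}T = \begin{bmatrix}\aaaw\\\bbbw\end{bmatrix}\TTTw\, G\, x = \Big(\sum_{i\geq n-1}\FFFw_i x^i\Big) G\, x = \sum_{j\geq n}\FFFw_{j-1}\,\sigma^{j-1}(G)\, x^{j}, $$
so the product starts in degree $n$ with $H_j = \FFFw_{j-1}\,\sigma^{j-1}(G)$. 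The determinant requirement then holds, since $\det(H_n) = \det(\FFFw_{n-1})\,\sigma^{n-1}(\mu)$ generates $\III\sigma^{n-1}(\III^{-1})\cdot\sigma^{n-1}\!\big(\III\sigma(\III^{-1})\big) = \III\sigma^{n}(\III^{-1})$, where the simplification uses $\sigma^{n-1}(\III^{-1}\III) = D$.

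It remains to check the ideal memberships demanded by Definition \ref{def}. Those for $\bbb = \bbbw C$ and for the $H_j$ are routine: writing $C = C_0 + C_1 x$ in powers of $x$, the two contributions $\bbbw_j\sigma^j(C_0)$ and $\bbbw_{j-1}\sigma^{j-1}(C_1)$ to $\bbb_j$ both land in $\begin{bmatrix}\III\sigma^j(\AAA^{-1}) & \III\sigma^j(\III^{-1}\AAA)\end{bmatrix}$ once one uses the sharpened containments $s\in\BBB\AAA^{-1}$ and $q\in\BBB\III^{-1}\AAA$ read off from $\BBB = s\AAA + q\III\AAA^{-1}$ together with the defining identity $\lambda D = \III\BBB^{-1}\sigma(\III^{-1}\BBB)$; the condition on the $H_j$ reduces to $G\in\begin{bmatrix}D&\sigma(\III^{-1})\\\III&\III\sigma(\III^{-1})\end{bmatrix}$, which holds by the choice of $\mu$. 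The genuine difficulty, and the step I expect to be the main obstacle, is the matrix $T = C^{-1}\TTTw G x$: the factor $x^{-1}\lambda^{-1}$ in the second column of $C^{-1}$ makes $C^{-1}\TTTw$ carry a term in degree $-1$, which the trailing $x$ raises back to degree $0$, so that $T$ has only non-negative powers of $x$; but one must still verify that every coefficient $T_i$ lies in $\begin{bmatrix}\AAA & \sigma^i(\III^{-1})\AAA\\\III\AAA^{-1} & \III\sigma^i(\III^{-1})\AAA^{-1}\end{bmatrix}$. I would organize this through the rank-one expansion $C^{-1}\TTTw = \aaact_0\, T^{(1)} + \aaat_0\, x^{-1}\lambda^{-1}\, T^{(2)}$, with $T^{(1)}, T^{(2)}$ the rows of $\TTTw$, and reduce the membership of each resulting coefficient to the same package of ideal identities (the definition of $\BBB$ and of $\lambda$) used for $\bbb$. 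Once these containments are confirmed, $\bbb$, $T$ and the $H_j$ exhibit $\aaa$ as $n$-invertible with respect to $\AAA$.
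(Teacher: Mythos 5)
Your proposal is correct and is, coefficient for coefficient, the paper's own proof in disguise: unwinding $\bbb = \bbbw C$ and $T = C^{-1}\TTTw G x$ recovers exactly the paper's witnesses $\bbb_i = \sbb_i\sigma^i(\aaa_0) - \sbc_i\sigma^i(\aaac_0)$ (with $\sbc_i = \sbwii_{i-1}\sigma^{i-1}(\lambda)$) and $\TTT_i = \aaact_0\stt_i + \aaat_0\sttc_i$ (with $\stt_i = \stwi_{i-1}\sigma^{i-1}(M)$, $\sttc_i = \sigma^{-1}(\stwii_i/\lambda)\sigma^{i-1}(M)$, $M = \diag(1,\mu)$), together with $H_n = \FFFw_{n-1}\sigma^{n-1}(M)$, the paper merely carrying out your conjugation by $C$ termwise, and with the small bonus that your global identity over $Q$ makes the vanishing of all coefficients below degree $n$ automatic. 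The containments you defer do close exactly as you predict: from $s \in \BBB\AAA^{-1}$, $q \in \BBB\III^{-1}\AAA$, $\aaac_0 \in \begin{bmatrix}\III\AAA^{-1}\BBB^{-1} & \AAA\BBB^{-1}\end{bmatrix}$, $\lambda D = \III\BBB^{-1}\sigma(\III^{-1}\BBB)$, and $\mu D = \III\sigma(\III^{-1})$ one gets $\stt_i \in \begin{bmatrix}\BBB & \BBB\sigma^{i}(\III^{-1})\end{bmatrix}$ and $\sttc_i \in \begin{bmatrix}\III\BBB^{-1} & \III\BBB^{-1}\sigma^{i}(\III^{-1})\end{bmatrix}$, whence every $T_i$ lies in the module required by Definition \ref{def}, so there is no gap.
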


\begin{proof}
	Assume that $\aaaw$ is $n-1$-invertible with respect to $\BBB$, i.e., there is a row $\bbbw = \sum_{i=0}^\infty \bbbw_i x^i$ with $\bbbw_i \in \begin{bmatrix} \III\sigma^{i}(\BBB^{-1})& \III\sigma^{i}(\III^{-1}\BBB)\end{bmatrix}$ and a matrix
	$\TTTw  = \sum_{i=0}^{\infty} \TTTw_i x^i$
	with $$\TTTw_i \in \begin{bmatrix} \BBB& \sigma^{i}(\III^{-1}) \BBB \\ \III \BBB^{-1} & \III \sigma^{i}(\III^{-1})\BBB^{-1}\end{bmatrix}$$ such that 
	$$ \begin{bmatrix} \aaaw \\ \bbbw \end{bmatrix} \TTTw = \sum_{i = n -1}^\infty \FFFw_i x^i$$
	where $$\FFFw_i \in \begin{bmatrix} D & \sigma^{i}(\III^{-1}) \\ \III & \III \sigma^{i} (\III^{-1}) \end{bmatrix}$$ with $\det(\FFFw_{n-1})$ generating $\III \sigma^{n-1}(\III^{-1})$.
	This is equivalent to the system of equations
	\begin{align} \label{sys1}
	\sum_{i=0}^k \begin{bmatrix} \aaaw_i \\ \bbbw_i \end{bmatrix} \sigma^i(\TTTw_{k-i}) = \FFFw_{n-1} \delta_{k,n-1}
	\end{align}
	for $k = 0,1,\ldots,n-1$. Write $\sbwi_i$ and $\sbwii_i$ for the entries of $\bbbw_i$.
	%$$ \bbbw_i = \begin{bmatrix} \bbbw_i^{(1)} & \bbbw_i^{(2)} \end{bmatrix}$$
	For $\lambda$ from \eqref{lambda}, let
	$$\sbb_i = \sbwi_i \quad \text{and} \quad \sbc_i = \sbwii_{i-1} \sigma^{i-1}(\lambda)$$
	for $i \in \N_0$ with $\sbc_0 = 0$,
	and set
	$$\bbb_i = \sbb_i \sigma^i(\aaa_0) - \sbc_i \sigma^i(\aaac_0) \in \begin{bmatrix} \III\sigma^{i}(\AAA^{-1})& \III\sigma^{i}(\III^{-1}\AAA)\end{bmatrix} \text{.}$$
	The row $\sum_{i=0}^\infty \bbb_i x^i$ will turn out to be the row $\bbb$ from Definition \ref{def}. We have yet to construct the matrix $\TTT$ from the definition.
	For this purpose, let $\mu$ be a generator for $\III \sigma(\III^{-1})$ and denote $\MMM = \diag(1, \mu)$.
	Write $\stwi_i$ and $\stwii_i$ for the rows of $\TTTw_i$.
	Let
	$$\stt_i = \stwi_{i-1} \sigma^{i-1}(M) \quad \text{and} \quad \sttc_i = \sigma^{-1}(\stwii_i/\lambda) \sigma^{i-1}(M)$$
	for $i \in \N_0$ with $ \stt_0 = 0$, and set
	$$ \TTT_i = \aaact_0 \stt_i + \aaat_0 \sttc_i \in \begin{bmatrix} \AAA& \sigma^{i}(\III^{-1}) \AAA \\ \III \AAA^{-1} & \III \sigma^{i}(\III^{-1})\AAA^{-1}\end{bmatrix} \text{.}$$
	For $k  = 0,1, \ldots,n-1$ we can compute
	\begin{align} \label{sys2}
	\begin{split}
	\sum_{i=0}^{k+1} \begin{bmatrix} \aaa_i \\ \bbb_i \end{bmatrix} \sigma^i(\TTT_{k+1-i}) &= \sum_{i=0}^{k+1} \begin{bmatrix} \sa_i  \sigma^i(\stt_{k+1-i}) + \sac_i \sigma^i(\sttc_{k+1-i}) \\ \sbb_i  \sigma^i(\stt_{k+1-i}) + \sbc_i  \sigma^i(\sttc_{k+1-i})  \end{bmatrix} \\
	&= \sum_{i=0}^{k} \begin{bmatrix} \sa_i  \sigma^i(\stt_{k+1-i}) + \sac_{i+1}  \sigma^{i+1}(\sttc_{k-i}) \\ \sbb_i  \sigma^i(\stt_{k+1-i}) + \sbc_{i+1}  \sigma^{i+1}(\sttc_{k-i})  \end{bmatrix} \\
	&=  \sum_{i=0}^{k} \begin{bmatrix} \aaaw_i \\ \bbbw_i \end{bmatrix} \sigma^i \left( \begin{bmatrix} \stt_{k+1-i} \\  \lambda \, \sigma(\sttc_{k-i}) \ \end{bmatrix} \right) \text{,}
	\end{split}
	\end{align}
	where in the second line we used the fact that $\sac_0 = \sbc_0 = 0$ and $\stt_0 = 0$.
	Since we have
	$$ \begin{bmatrix} \stt_{k+1-i} \\  \lambda \, \sigma(\sttc_{k-i}) \ \end{bmatrix} = \TTTw_{k-i} \sigma^{k-i}(\MMM)\text{,}$$
	the system of equations \eqref{sys2} can be written as
	$$\sum_{i=0}^{k+1} \begin{bmatrix} \aaa_i \\ \bbb_i \end{bmatrix} \sigma^i(\TTT_{k+1-i}) = \left( \sum_{i=0}^{k} \begin{bmatrix} \aaaw_i \\ \bbbw_i \end{bmatrix} \sigma^i(\TTTw_{k-i})\right) \sigma^k (\MMM) = \FFFw_{n-1} \sigma^k(M) \delta_{k,n-1}$$
	for $k = 0, 1, \ldots, n-1$. Let
	$$H_n = \FFFw_{n-1} \sigma^{n-1}(M) \in  \begin{bmatrix} D & \sigma^{n}(\III^{-1}) \\ \III & \III \sigma^{n} (\III^{-1}) \end{bmatrix}$$
	and note that $\det (H_n)$ generates $\III \sigma^{n}(\III^{-1})$. Using
	$$ \begin{bmatrix} \aaa_0 \\ \bbb_0 \end{bmatrix} \TTT_{0} = 0 \text{,}$$
	we see that
	$$ \begin{bmatrix} \aaa \\ \sum_{i=0}^{\infty} \bbb_i x^i \end{bmatrix} \sum_{i=0}^{\infty}  \TTT_{i} x^i = H_n x^n + \sum_{i=n+1}^\infty H_i x^i$$
	for some matrices
	$$H_i \in \begin{bmatrix} D & \sigma^{i}(\III^{-1}) \\ \III & \III \sigma^{i} (\III^{-1}) \end{bmatrix} \text{,}$$
	which proves that $\aaa$ is $n$-invertible with respect to $\AAA$.
\end{proof}

This final lemma before the proof of the main theorem essentially shows that the ring $R$ satisfies the assumption of Lemma \ref{stab imp iso}. 

\begin{lemma} \label{induction}
If $\aaa$ is $n$-unimodular with respect to $\AAA$, then it is $n$-invertible with respect to $\AAA$.
\end{lemma}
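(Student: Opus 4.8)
The plan is to induct on $n$, taking as induction hypothesis the full statement of the lemma for the index $n-1$ \emph{for every} nonzero ideal $\AAA \lhd D$. This uniformity in the ideal is essential: the reduction that lowers the index replaces the ambient ideal $\AAA$ by the ideal $\BBB$ attached to $\aaa_0$, so the hypothesis must already be available for $\BBB$. Note also that the conclusion of $n$-invertibility presupposes that $\III\sigma^n(\III^{-1})$ is principal, which is guaranteed by the standing assumption that $\sigma$ acts trivially on $G(D)$; this is used inside Lemma \ref{lemma inv}, which I will invoke as a black box.

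For the inductive step ($n \geq 1$) I would simply compose the two preceding lemmas. If $\aaa$ is $n$-unimodular with respect to $\AAA$, then Lemma \ref{lemma uni} shows that the associated row $\aaaw$ is $(n-1)$-unimodular with respect to $\BBB$; the induction hypothesis, applied with $\BBB$ in place of $\AAA$, makes $\aaaw$ into an $(n-1)$-invertible row with respect to $\BBB$; and Lemma \ref{lemma inv} converts this back into the assertion that $\aaa$ is $n$-invertible with respect to $\AAA$. No computation is needed here beyond invoking the three results in sequence.

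The real work is the base case $n = 0$, which I expect to be the main obstacle. Being $0$-unimodular with respect to $\AAA$ provides only the leading relation $\aaa_0 \alphaaa_0 = 1$; writing $\aaa_0 = \begin{bmatrix} s & q\end{bmatrix}$ and $\alphaaa_0 = \begin{bmatrix} u \\ v\end{bmatrix}$ with $u \in \AAA$ and $v \in \III \AAA^{-1}$, this reads $su + qv = 1$. The key observation is that $su \in s\AAA$ and $qv \in q\III\AAA^{-1}$, so $1 \in s\AAA + q\III\AAA^{-1} = \BBB$, forcing $\BBB = D$. This is precisely the extra input that rescues the base case: the leading coefficient $H_0$ of any candidate product is constrained to have determinant lying in $\BBB$, so a unit determinant (needed since $\det(H_0)$ must generate $\III\III^{-1} = D$) can be achieved only when $\BBB = D$.

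Granting $\BBB = D$, I would exhibit the $0$-invertibility data with \emph{constant} $\bbb$ and $\TTT$. Take $\TTT = \TTT_0 = \begin{bmatrix} u & q \\ v & -s \end{bmatrix}$, whose columns are $\alphaaa_0$ and $\aaat_0$, so that $\aaa_0 \TTT_0 = \begin{bmatrix} 1 & 0 \end{bmatrix}$, and choose $\bbb = \bbb_0 = \begin{bmatrix} b_1 & b_2 \end{bmatrix}$ with $b_1 \in \III\AAA^{-1}$, $b_2 \in \AAA$, and $b_1 q - b_2 s = 1$; such a choice exists exactly because $q\III\AAA^{-1} + s\AAA = \BBB = D$. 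Then the $x^0$-coefficient of $\begin{bmatrix} \aaa \\ \bbb_0 \end{bmatrix}\TTT_0$ is $H_0 = \begin{bmatrix} 1 & 0 \\ b_1 u + b_2 v & 1 \end{bmatrix}$, with $\det(H_0) = 1$ a generator of $\III\III^{-1} = D$. What remains are the routine membership checks: that $\bbb_0$ and $\TTT_0$ lie in the prescribed modules, that $b_1 u + b_2 v \in \III$, and that for $i \geq 1$ each higher coefficient $H_i = \begin{bmatrix} \aaa_i \sigma^i(\TTT_0) \\ 0 \end{bmatrix}$ lies in $\begin{bmatrix} D & \sigma^i(\III^{-1}) \\ \III & \III\sigma^i(\III^{-1}) \end{bmatrix}$; all of these follow from the defining containments for $\aaa_i$ together with $\AAA^{-1}\AAA \subseteq D$ and $\III^{-1}\III = D$. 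This exhibits $\aaa$ as $0$-invertible with respect to $\AAA$ and closes the induction.
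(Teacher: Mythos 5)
Your proof is correct, and its skeleton coincides with the paper's: induction on $n$ with the hypothesis quantified over all nonzero ideals $\AAA$ (exactly the uniformity you flag as essential), and an inductive step that is precisely the chain Lemma \ref{lemma uni}, then the induction hypothesis applied with $\BBB$ in place of $\AAA$, then Lemma \ref{lemma inv}. Where you genuinely diverge is the base case $n=0$. The paper never mentions $\BBB$ there: it invokes the $1\frac{1}{2}$-generator property of the Dedekind domain $D$ to manufacture, independently of the row $\aaa$, a determinant-one matrix $\TTT_0 \in \begin{bmatrix} \AAA & \III^{-1}\AAA \\ \III\AAA^{-1} & \AAA^{-1} \end{bmatrix}$ (pick nonzero $s \in \III$ and $b \in \AAA$, complete $sb$ to a generating pair $a, sb$ of $\AAA$, solve $ap + sbq = 1$ with $p,q \in \AAA^{-1}$), and then takes $\bbb_0$ with $\bbbt_0 = \alphaaa_0$, so that $\det H_0 = \aaa_0\alphaaa_0 \det \TTT_0 = 1$. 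You instead extract from the constant-term relation $\aaa_0\alphaaa_0 = 1$ the identity $\BBB = s\AAA + q\III\AAA^{-1} = D$, take $\TTT_0$ with columns $\alphaaa_0$ and $\aaat_0$, and complete $\bbb_0$ directly from $1 \in s\AAA + q\III\AAA^{-1}$; the resulting $H_0$ is unitriangular with determinant $1$. Your variant is slightly leaner, since it avoids the $1\frac{1}{2}$-generator property altogether, and your side observation that $\det H_0$ is always constrained to lie in $\BBB$ (so the base case can only succeed when $\BBB = D$, which $0$-unimodularity supplies) is a correct necessity remark that the paper leaves implicit. Both constructions ultimately rest on the same relation $\aaa_0\alphaaa_0 = 1$, and your remaining membership checks (that $b_1 u + b_2 v \in \III$, and that the higher coefficients $H_i = \begin{bmatrix} \aaa_i \sigma^i(\TTT_0) \\ 0 \end{bmatrix}$ land in the prescribed modules) all go through.
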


\begin{proof}
We proceed by induction on $n$, where $\AAA$ varies over all nonzero ideals of $D$ and $\aaa$ over all suitable rows. First consider the $n=0$ case.
Assume that $\aaa$ is $0$-unimodular with respect to $\AAA$, i.e., there is a column $\alphaaa = \sum_{i=0}^{\infty} \alphaaa_i x^i$ with $$\alphaaa_i \in \begin{bmatrix} \AAA \\ \III \AAA^{-1}\end{bmatrix}$$ such that 
	$$ \aaa \alphaaa = 1 + \sum_{i=1}^{\infty} h_i x^i$$
	for some $h_i \in D$.
	Take any nonzero $s \in \III$ and $b \in \AAA$. Then $s b \in \AAA$. Since $D$ is a Dedekind domain, there is an $a \in \AAA$ such that $a$ and $sb$ generate $\AAA$ (this is the $1 \frac{1}{2}$-generator property). Then there exist $p, q \in \AAA^{-1}$ such that
	$$ a p + sbq = 1 \text{.}$$
	Thus, the matrix
	$$ \TTT_0 = \begin{bmatrix} a & b \\ -sq & p \end{bmatrix} \in \begin{bmatrix} \AAA& \III^{-1} \AAA \\ \III \AAA^{-1} & \AAA^{-1}\end{bmatrix}$$
	has determinant $1$. Let $\bbb_0 \in \begin{bmatrix} \III\AAA^{-1}& \AAA \end{bmatrix}$ be a row such that
	$$ \alphaaa_0 = \bbbt_0 \text{.}$$
	 Taking $\TTT = \TTT_0$ and $\bbb = \bbb_0$, we see that
	 $$ \begin{bmatrix} \aaa \\ \bbb \end{bmatrix} \TTT = \begin{bmatrix} \aaa_0 \\ \bbb_0 \end{bmatrix} \TTT_0 + \sum_{i=1}^\infty H_i x^i$$
	 for some matrices
	 $$H_i \in \begin{bmatrix} D & \sigma^{i}(\III^{-1}) \\ \III & \III \sigma^{i} (\III^{-1}) \end{bmatrix} \text{,}$$
	 and since
	 $$\det \left( \begin{bmatrix} \aaa_0 \\ \bbb_0 \end{bmatrix} \TTT_0\right) = \aaa_0 \alphaaa_0 \det ( \TTT_0) = 1 \text{,}$$
	 this shows that $\aaa$ is $0$-invertible with respect to $\AAA$.
	 
	 Now let $n \geq 1$ and assume that the lemma holds for $n-1$ (and all nonzero ideals $\AAA$ and suitable rows $\aaa$). Let $\aaa$ be $n$-unimodular with respect to $\AAA$. By Lemma \ref{lemma uni}, $\aaaw$ is $n-1$-unimodular with respect to $\BBB$. By induction hypothesis, $\aaaw$ is $n-1$-invertible with respect to $\BBB$, and finally, by Lemma \ref{lemma inv}, $\aaa$ is $n$-invertible with respect to $\AAA$. This concludes the induction step.
\end{proof}

We are now in position to prove the main theorem.

\begin{proof}[Proof of Theorem \ref{G theorem}]
	Take any nonzero ideal $\III \lhd D$ and denote $I = \III R$. First, we will show that we can use Lemma \ref{stab imp iso}, i.e., we will show that any unimodular row $\aaa \in \begin{bmatrix} R & I^{-1} \end{bmatrix}$ is invertible. As pointed out in a paragraph before Definition \ref{def1}, we can assume that $\aaa = \sum_{i=0}^{\infty} \aaa_i x^i$ for some
$\aaa_i \in  \begin{bmatrix} D & \sigma^{i}(\III^{-1})\end{bmatrix}$ and $\aaa_0 \neq 0$. By Lemma \ref{lem1}, $\aaa$ is $n$-unimodular with respect to $\AAA = D$ for some $n \in \N_0$, by Lemma \ref{induction}, $\aaa$ is then $n$-invertible with respect to $\AAA = D$, and finally, by Lemma \ref{lem2}, $\aaa$ is invertible. This shows that the assumption of Lemma \ref{stab imp iso} is satisfied.

Take any other ideal $\JJJ \lhd D$ and denote $J = \JJJ R$. Assume that $I$ and $J$ are stably isomorphic in $R$.  Since $R$ is a noncommutative Dedekind domain, the cancellation property \eqref{canc property} implies that $R \oplus I \simeq R \oplus J$. Lemma \ref{stab imp iso} shows that $I \simeq J$, i.e., there is a $q = \sum_{i=k}^\infty q_i x^i \in Q$ with $q_k \neq 0$ such that
$$ q I = J \text{.}$$
In particular, this implies that $q_k \sigma^k(\III) = \JJJ$. Therefore, we have $\sigma^k(\III) \simeq \JJJ$, but since $\sigma$ acts trivially on $G(D)$, this shows that $\III \simeq \JJJ$. Therefore, the map $\phi \colon G(D) \rightarrow G(R)$ is injective, and thus, by Proposition \ref{extension lemma}, an isomorphism.
\end{proof}

\section*{Acknowledgment}

The author would like to thank his supervisor Daniel Smertnig for his guidance throughout this work. He would also like to thank the referee for their helpful comments.
The author was supported by the Slovenian Research and Innovation Agency (ARIS) program P1-0288.


\begin{thebibliography}{HSF07}
	
	\bibitem[Ann02]{Ann}
	S.~Annin.
	\newblock Associated primes over skew polynomial rings.
	\newblock {\em Commun. Algebra}, 30(5):2511--2528 (2002); erratum no. 10,
	5109--5110, 2002.
	
	\bibitem[HSF07]{Sal1}
	A.~M. Hassanein, R.~M. Salem, and M.~Farahat.
	\newblock Pr{\"u}fer domains of generalized power series.
	\newblock {\em J. Egypt. Math. Soc.}, 15(1):11--19, 2007.
	
	\bibitem[Jac85]{Jac}
	N.~Jacobson.
	\newblock Basic algebra {I}. 2nd ed.
	\newblock New {York}: {W}. {H}. {Freeman} and {Company}. {XVIII}, 499 p.
	{{\textsterling}} 19.95 (1985)., 1985.
	
	\bibitem[Lam99]{Lam2}
	T.~Y. Lam.
	\newblock {\em Lectures on modules and rings}, volume 189 of {\em Grad. Texts
		Math.}
	\newblock New York, NY: Springer, 1999.
	
	\bibitem[Lam01]{Lam}
	T.~Y. Lam.
	\newblock {\em A first course in noncommutative rings.}, volume 131 of {\em
		Grad. Texts Math.}
	\newblock New York, NY: Springer, 2nd ed. edition, 2001.
	
	\bibitem[LR11]{LR}
	L.~S. Levy and J.~C. Robson.
	\newblock {\em Hereditary {Noetherian} prime rings and idealizers.}, volume 174
	of {\em Math. Surv. Monogr.}
	\newblock Providence, RI: American Mathematical Society (AMS), 2011.
	
	\bibitem[LW12]{LW}
	E.~S. Letzter and L.~Wang.
	\newblock Goldie ranks of skew power series rings of automorphic type.
	\newblock {\em Commun. Algebra}, 40(6):1911--1917, 2012.
	
	\bibitem[Maj22]{Ma2}
	A.~Majidinya.
	\newblock On the p.q.-{Baer} skew generalized power series modules.
	\newblock {\em Algebra Colloq.}, 29(3):405--418, 2022.
	
	\bibitem[MM21]{Ma1}
	A.~Majidinya and A.~Moussavi.
	\newblock Weakly principally quasi-{Baer} skew generalized power series rings.
	\newblock {\em Appl. Algebra Eng. Commun. Comput.}, 32(3):409--425, 2021.
	
	\bibitem[MR01]{McR}
	J.~C. McConnell and J.~C. Robson.
	\newblock {\em Noncommutative {Noetherian} rings. {With} the cooperation of
		{L}. {W}. {Small}.}, volume~30 of {\em Grad. Stud. Math.}
	\newblock Providence, RI: American Mathematical Society (AMS), reprint\-ed with
	corrections from the 1987 original edition, 2001.
	
	\bibitem[MZ09]{MZ}
	R.~Mazurek and M.~Ziembowski.
	\newblock The ascending chain condition for principal left or right ideals of
	skew generalized power series rings.
	\newblock {\em J. Algebra}, 322(4):983--994, 2009.
	
	\bibitem[MZ15]{MZ2}
	R.~Mazurek and M.~Ziembowski.
	\newblock On semilocal, {B{\'e}zout} and distributive generalized power series
	rings.
	\newblock {\em Int. J. Algebra Comput.}, 25(5):725--744, 2015.
	
	\bibitem[PO23]{PO}
	M.~H. Park and D.~Y. Oh.
	\newblock Krull property of generalized power series rings.
	\newblock {\em J. Pure Appl. Algebra}, 227(2):8, 2023.
	\newblock Id/No 107179.
	
	\bibitem[RS21]{RS}
	O.~M. Romaniv and A.~V. Sagan.
	\newblock Euclidean domain and skew {Laurent} series rings.
	\newblock {\em Mat. Metody Fiz.-Mekh. Polya}, 64(2):42--46, 2021.
	
	\bibitem[Sal09]{Sal2}
	R.~M. Salem.
	\newblock Pr{\"u}fer rings of generalized power series.
	\newblock {\em Southeast Asian Bull. Math.}, 33(3):527--534, 2009.
	
	\bibitem[SV19]{Sm3}
	D.~Smertnig and J.~Voight.
	\newblock Definite orders with locally free cancellation.
	\newblock {\em Trans. Lond. Math. Soc.}, 6(1):53--86, 2019.
	
	\bibitem[Tug05]{Tu}
	D.~A. Tuganbaev.
	\newblock Laurent series rings and pseudo-differential operator rings.
	\newblock {\em J. Math. Sci., New York}, 128(3):2843--2893, 2005.
	
	\bibitem[Tug21]{Tu4}
	A.~A. Tuganbaev.
	\newblock Right serial skew {Laurent} series rings.
	\newblock {\em J. Algebra Appl.}, 20(3):2, 2021.
	\newblock Id/No 2150035.
	
\end{thebibliography}
\end{document}